\algnewcommand{\Initialize}[1]{%
	\State \textbf{Initialization:}
	\Statex {\raggedright #1}
}
\newcommand{\proj}[2][] {{\mathcal{P}}_{{#1}} {\left(#2\right)}}
\newcommand{\EXP}[1]{\mathsf{E}\!\left[#1\right] }
\newcommand{\prob}[1]{\mathsf{p}_#1} 
\newcommand{\us}[1]{{\color{black}#1}}
\newtheorem{assumption}{Assumption}
\newtheorem{remark}{Remark}
\newtheorem{theorem}{Theorem}
\newtheorem{lemma}{Lemma}
\newtheorem{proposition}{Proposition}
\newtheorem{definition}{Definition}
\newtheorem{corollary}{Corollary}
\def\argmin{\mathop{\rm argmin}}
\title{\LARGE \bf
A Randomized Block Coordinate Iterative Regularized Gradient Method for  High-dimensional Ill-posed Convex Optimization}
\author{Harshal Kaushik$^{1}$ and Farzad Yousefian$^{2}$
\thanks{$^{1}$Harshal Kaushik is a graduate student at the  Department of Industrial Engineering and Management, Oklahoma State University, Stillwater, OK 74074, USA.
{\tt\small harshal.kaushik@okstate.edu}}%
\thanks{$^{2}$Farzad Yousefian is an assistant professor at the Department of Industrial Engineering and Management, Oklahoma State University, Stillwater, OK 74074, USA.
{\tt\small farzad.yousefian@okstate.edu}}%
}
\begin{document}

\maketitle
\thispagestyle{empty}
\pagestyle{empty}

\begin{abstract}
Motivated by  high-dimensional nonlinear optimization problems as well as ill-posed optimization problems arising in image processing, we consider a bilevel optimization model where we seek among the optimal solutions of the inner level problem, a solution that minimizes a secondary metric. Our goal is to address the high-dimensionality of the bilevel problem, and the nondifferentiability of the objective function. Minimal norm gradient, sequential averaging, and iterative regularization are some of the recent schemes developed for addressing the bilevel problem. But none of them address the high-dimensional structure and nondifferentiability. With this gap in the literature, we develop a randomized block coordinate iterative regularized gradient descent scheme \eqref{algorithm_proposed}. We establish the convergence of the sequence generated by \ref{algorithm_proposed} to the unique solution of the bilevel problem of interest. Furthermore, we derive a rate of convergence  $ \centering \us{\cal O} \left(\frac{1}{{k}^{0.5-\delta}}\right)$, with respect to the inner level objective function. We demonstrate the performance of \ref{algorithm_proposed}  in solving the ill-posed problems arising in image processing.

\end{abstract}

\section{Introduction}



In this work we are interested in solving a bilevel problem given as,
	\begin{equation} \label{Q}
		\tag{$P^g_f$}  
		\begin{split}
			& \text{minimize} \  g( x) \\
			 &\textrm {s.t.} \quad  x \in    \argmin\{f(x): x \in X \},
		\end{split}
	\end{equation}	
	where functions $f$ and $g$ are defined as $ f: \mathbb{R}^n\rightarrow\mathbb{R}$ and $g: \mathbb{R}^n\rightarrow\mathbb{R}$. \eqref{Q} is a high-dimensional structure in a sense that the dimensions of the solution space can be huge. This causes high computation efforts in taking the gradient at any iteration. Set $X$ is assumed to be having a block structure, i.e. it can be written as, 
		$\displaystyle X =  \prod_{i = 1}^{d}  X_i,$
	 where  $ X_i \subseteq \mathbb{R}^{n_i} $ and $\sum_{i = 1}^{d} n_i  =  n$. Precisely, the assumptions on \eqref{Q} are provided next. 
	\begin{assumption}  \label{obj_func}
	Let the following hold: 
	\begin{itemize}
		\item[(a)] Any block $i$ of set $X$ $ (X_i \subseteq \mathbb{R}^{n_i})$ is assumed to be nonempty, closed, and convex for all $ \ i = 1, \dots, d$.
		\item[(b)] $ f: \mathbb{R}^n\rightarrow (-\infty, \infty]$ is a nondifferentiable, proper, and convex function. 
		\item[(c)] $g:\mathbb{R}^n\rightarrow(-\infty, \infty]$ is a nondifferentiable, proper, and $\mu$-strongly convex function $(\mu > 0)$. 
		\item[(d)] $X \subseteq \mathrm{int}\left(\mathrm{dom}( f) \cap \mathrm{dom}(g)\right)$.
	\end{itemize}
\end{assumption}
\subsection{Motivating examples}
Here we present two applications of the formulation \eqref{Q}. 

\begin{table*}[t]
	\caption{Comparison of schemes for solving bilevel optimization problem}
	\centering
	\begin{tabular}{c | c | c | c | c | c | c}
		\hline
		\begin{minipage}{0.4cm} Ref. \end{minipage} & Problem formulation & Assumption & Scheme & Scale & Metric & Rate \\ 
		\hline
		\begin{minipage}{0.4cm} \centering \cite{Solodov_2007} \end{minipage} & {$ \displaystyle\begin{aligned} &\text{minimize} \  g( x) \\ \textrm {s.t.} \quad  x \in &   \argmin\{f(x): x \in X \}  \end{aligned}$}& \begin{minipage}{2.5cm}$f$ and $g$ both smooth and convex\end{minipage} & \begin{minipage}{2.0cm} \centering Iterative regularized\end{minipage} & \begin{minipage}{01cm} \centering Standard scale \end{minipage} & $-$ & $-$  \\
		\hline 
		\begin{minipage} {0.4cm}\centering\cite{BeckSabach_2014}\end{minipage}  & {$ \displaystyle\begin{aligned} &\text{minimize} \  g( x) \\	\textrm {s.t.} \quad  x \in  &  \argmin\{f(x): x \in X \}  \end{aligned}$} & \begin{minipage}{2.5cm}$f$ convex, Lipschitz cont. $g$ strongly conv. \end{minipage}  & \begin{minipage}{2.0cm} \centering Minimal norm gradient \end{minipage} & \begin{minipage}{01cm}\centering Standard scale \end{minipage} & \begin{minipage}{0.5cm}Inner level\end{minipage}  & $\centering\us{\cal O} \left(\frac{1}{\sqrt{k}}\right)$\\
		\hline
		\begin{minipage} {0.4cm}\centering\cite{SabachShtern_2017} \end{minipage}  &  {$ \displaystyle\begin{aligned} &\text{minimize} \  g( x) \\	x \in    \argmin&\{f_1(x)+f_2(x): x \in \mathbb{R}^n \}  \end{aligned}$}  & \begin{minipage}{2.5cm}$f_1, f_2$ convex. $g$ is strongly  convex. $f_1$, $g$  Lipschitz cont. \end{minipage} & \begin{minipage}{2.0cm} \centering Sequential averaging \end{minipage} & \begin{minipage}{01cm} \centering Standard scale \end{minipage} & \begin{minipage}{0.5cm}Inner level\end{minipage} & $\us{\cal O} \left(\frac{1}{k}\right)$\\
		\hline
		\begin{minipage} {0.4cm}\centering\cite{Yousefian} \end{minipage} &  {$  \begin{aligned}  &  \text{minimize }\|x\| \\  \text{s. t. }x\in \text{SOL}&(X,F), \text{ where } F(x) \triangleq f(x,\xi) \end{aligned}$}  & \begin{minipage}{2.5cm} $F$ is monotone and continuous. \end{minipage}   & \begin{minipage}{2.0cm} \centering  $\text{aRSSA}_{\textit{l,r}}$ \end{minipage} & \begin{minipage}{01cm}\centering Standard scale \end{minipage} & \begin{minipage}{0.5cm} Outer \\ level \end{minipage} & {$ \displaystyle \begin{aligned} \us{\cal O} \left(\frac{1}{{k}^{1/6-\delta}}\right) \end{aligned} $} \\
		\hline
		\begin{minipage} {0.4cm}\centering\cite{Ozdaglar} \end{minipage}  & \begin{minipage}{4cm} \vspace*{-0.19cm} \begin{align*} 	 \min_{x_i \in X_i,  z\in Z} \textstyle  \sum_{i=1}^{N}f_i\left(x_i\right) \	\textrm { s.t. } Dx + Hz = 0 \end{align*} \vspace*{-0.25cm} \end{minipage} & \begin{minipage}{2.5cm}  $f_i$ is convex and possibly nonsmooth. \end{minipage} & \begin{minipage}{2.0cm}\centering Asynchronous ADMM \end{minipage} & \begin{minipage}{01cm} \centering Standard Scale	\end{minipage} &  Feasibility & $\us{\cal O} \left(\frac{1}{{k}}\right)$\\
		\hline
		\begin{minipage} {0.4cm} \centering\cite{Aybat_Linear_ADMM}  \end{minipage} &{$   \begin{aligned}  &\textstyle \text{for}  \ \mathcal{G}(\mathcal{N},\mathcal{E}),  \text{ min} \textstyle  \sum_{i\in \mathcal{N}}\xi_i( x)+f_i(x) \\	&\textrm {s.t.} \ x \in \mathbb{R}^n, \  x_i=x_j \ \text{ for all }(i,j)\in\mathcal{E}\end{aligned}$} &\begin{minipage}{2.5cm}  $\xi_i, f_i $ are convex, $f_i$  Lipschitz continuous. \end{minipage}  & \begin{minipage}{2.0cm}\centering \begin{minipage}{2cm} Distributed proximal gradient 
		\end{minipage} \end{minipage} & \begin{minipage}{01.2cm}\centering \centering Standard Scale	\end{minipage} &  Feasibility  & {$\begin{aligned}  \us{\cal O} \left({1}/{{k}}\right)  \end{aligned} $} \\
		\hline
		\begin{minipage} {0.4cm} \centering\cite{Xu_primalDualPriximal} \end{minipage}  &  {$ \displaystyle\begin{aligned} \min_x & \  g_1( x) +g_2(x)\\	&\textrm {s.t. } Ax=b \end{aligned}$} & \begin{minipage}{2.5cm}$g_1$ is convex and Lipschitz continuous. $g_2$ is convex. \end{minipage} & \begin{minipage}{2.0cm} \centering  Linear \\ ADMM \end{minipage} & \begin{minipage}{1cm} \centering Standard Scale	\end{minipage} & \begin{minipage}{01.15cm}Feasibility \\ Optimality\end{minipage} & {$ \begin{aligned} \centering   \us{\cal O} \left({1}/{{k}^2}\right) \end{aligned} $} \\
		\hline
		\begin{minipage} {0.4cm}{\bf This}\\ {\bf work}\end{minipage}  &  { $\begin{aligned} &\text{minimize} \  g( x) \\ \vspace{-0.5cm}	    x \in    \argmin&\{f(x): x \in X\}; \ X =  {\textstyle\prod_{i = 1}^{d}  X_i}  \end{aligned}$} & \begin{minipage}{2.5cm} $f$ is convex and $g$ is strongly convex. \end{minipage}& \begin{minipage}{2.2cm}  Random block iterative regularized gradient \end{minipage} & \begin{minipage}{0.8cm} \begin{minipage}{0.8cm} \centering Large Scale	\end{minipage} \end{minipage} &Feasibility  &  $ \centering \us{\cal O} \left(\frac{1}{{k}^{0.5-\delta}}\right)$ \\
		\hline
	\end{tabular}
\label{table1}
\vspace*{-0.40cm}
\end{table*}

\noindent (i) {\bf High-dimensional nonlinear constrained optimization}: Consider the following  problem with nonlinear constraints,  
\begin{equation} \label{Q_h}
\tag{$1$}  
\begin{split}
& \text{minimize} \  g( x) \\
&\textrm {s.t.} \ h_i(x) \leq 0 \ \text{ for } i = 1, \dots, m\\
&x \in X \triangleq  \prod_{i = 1}^{d}  X_i.
\end{split}
\end{equation}
The assumptions on \eqref{Q_h} are: (i) function $h_i:\mathbb{R}^n\rightarrow\mathbb{R}$ is convex; (ii) function $g:\mathbb{R}^n\rightarrow\mathbb{R}$ is convex; (iii) $X \subseteq \mathbb{R}^n$ is convex, satisfying Assumption \ref{obj_func} (a).

 Provided that the feasible set of \eqref{Q_h} is nonempty, this problem  can be equivalently written in a bilevel structure as following,
\begin{equation*} 
\begin{split}
& \text{minimize} \  g( x) \\
\textrm {s.t.} \quad  x \in  \argmin& \Bigg \{f(x)\triangleq\sum_{i = 1}^{m}\max\big\{0,h_i(x)\big\} : x \in X  \Bigg \}  .
\end{split}
\end{equation*}	
\noindent (ii) {\bf Ill-posed optimization}: Linear inverse problems arising in image deblurring can be written as the following optimization problem,
\begin{equation*} \label{ImageProc}
\tag{2}
\begin{split}
& \text{minimize} \  \left\Vert\left. Ax - b\right\Vert\right.^2\\
&\textrm {s.t.} \quad  x \in \mathbb{R}^n,
\end{split}
\end{equation*}	
where
$A$ is a blurring operator  $(A \in \mathbb{R}^{m\times n}), $
$b$ is the given blurred image  $ (b \in \mathbb{R}^{m}),$ and
$x$ is a deblurred image  $(x \in \mathbb{R}^{n}).$ This is an ill-posed problem in a sense that there may be multiple solutions or the optimal solution $x$ may be very sensitive to the perturbation in the input $b$. To address the ill-posedness, problem \eqref{ImageProc} can be reformulated in a bilevel structure as following, (see \cite{Rosasco_2018}). 
\begin{equation*} 
\begin{split}
& \text{minimize} \ \|x\|^2 \\
&\textrm {s.t.} \quad  x \in \argmin\Big\{\left\Vert\left. Ax - b\right\Vert\right.^2 : x \in \mathbb{R}^n \Big\}.
\end{split}
\end{equation*}

\subsection{Existing methods}
Sequential regularization, minimal norm gradient, sequential averaging, and iterative regularization are some of the recent schemes developed for addressing problem \eqref{Q}. One of the classical approaches to address the ill-posedness is the regularization technique. 
\begin{equation} \label{eta_problem}
\tag{$P_{\eta}$}  
\begin{split}
& \text{minimize} \quad  f ( x) + \eta  g( x) \\
& \textrm { s.t.} \quad  x \in  X.
\end{split}
\end{equation}
Tikhonov in \cite{Tikhonov_1977} showed that under some assumptions, the solution of regularized problem \eqref{eta_problem} converges to the solution of the inner level problem of \eqref{Q} as the regularization parameter $\eta$ goes to zero. Later the threshold value of $\eta$, under which the solution of \eqref{eta_problem} is same as the solution of the inner level problem of \eqref{Q} was studied under the area of {\it exact regularization} \cite{Manga_1979, Manga_1985, Tseng_2007}. There have been numerous theoretical studies in the 80's, 90's \cite{Bertsekas_1975,Burke_1991,Manga_1979, Bertsekas_1982, Fletcher_1984, Manga_1985} and early 2000 \cite{Bert_Nedich_Ozzi_2003, Conn_2000} on finding the suitable $\eta$, but in practice there is not much guidance on tuning this parameter. Finding a suitable $\eta$ necessitate solving a sequence of problem \eqref{eta_problem} for ${\eta_k}$, where $\eta_{{k}}\rightarrow 0$. This {\it two loop scheme} is highly inefficient, especially in high dimensional spaces. 

In the past decade, interest has been shifted to solving  the bilevel problem \eqref{Q} using  {\it single loop schemes}. Solodov in \cite{Solodov_2007} showed that for both functions $g$ and $f$ in \eqref{Q} with Lipschitz gradient, and  $f$ to be a composite function with the indicator function, solutions to \eqref{Q} can be found by  iterative regularized gradient descent with sequence  $\eta_k\rightarrow0$ and $\sum_{k=1}^{\infty} \eta_k = \infty$. In \eqref{eta_problem}, when $g$ is $\ell_2$ norm in variational inequality regimes, Yousefian et al  showed that solution to \eqref{Q} can be found by employing an iterative regularized gradient descent scheme (see \cite{Yousefian}).

In 2014, minimal norm gradient (MNG) scheme was proposed  \cite{BeckSabach_2014}. This involves solving the projection (this itself is an another optimization problem) for each iteration $k$, which makes MNG to be difficult to implement for the large scale problems. Later in \cite{SabachShtern_2017} a sequential averaging scheme (BiG-SAM) was developed  with a  rate of convergence $\us{\cal O}\left(1/k\right)$.  Recently in \cite{Rosasco_2018} a general iterative regularized algorithm based on a primal-dual  diagonal descent method was proposed to solve \eqref{Q}.  

In these  papers, the missing part is addressing the high-dimensional structure, which is  common in the high resolution image processing problems. Our goal is to bridge this gap by developing a randomized block coordinate iterative regularized gradient descent scheme to solve the high-dimensional problems.  

Coordinate descent methods have recently gained popularity due to their potential of solving the large-scale optimization problems. In \cite{Takac_2014, Nesterov_2012}, block coordinate descent found to be effective when the size of solution space is of the order $10^{8}-10^{12}$.  Therefore block strategy is effective when dealing with high dimensionality. Cyclic coordinate descent is a common strategy to  make the selection of block. It is well studied in the past but recently the focus has been shifted to randomized strategy  due to theoretical \cite{Nesterov_2012,Takac_2014,ShaiZhang_2013} and practical advantages it offers in solving the large scale machine learning problems \cite{ChangLin_2008,ChangLinKeerthi_2008,Tewari_2009,ShaiZhang_2013}.   

High-dimensional nonlinear constrained optimization \eqref{Q_h} is the another problem we consider in this work. One of the popular primal-dual methods is Alternating Direction Method of Multipliers (ADMM) \cite{Ozdaglar,Aybat_Linear_ADMM,Xu_primalDualPriximal}. One of the underlying assumptions for ADMM is the linear constraints. In our work, bilevel problem \eqref{Q_h} addresses the nonlinearity in the constraints and  the high-dimensionality of the space. 
\subsection{Main contributions}
(I) We develop a single loop first order scheme \ref{algorithm_proposed} with the mild requirements such as  $f$ and $g$ can be nondifferentiable functions.  (II) \ref{algorithm_proposed} can handle the high-dimensional structure of bilevel problem \eqref{Q}.  (III) We establish the convergence of the sequence generated from \ref{algorithm_proposed} to the unique solution of \eqref{Q}. (IV) We derive the rate of convergence  $ \centering \us{\cal O} \left(\frac{1}{{k}^{0.5-\delta}}\right)$, with respect to the inner level function of the bilevel problem.  \\To highlight the contribution of our work and its distinction from the other methods, we provide a table (see TABLE \ref{table1}).

The rest of the paper is organized as follows. Section II, we propose \ref{algorithm_proposed} scheme with preliminaries. Section III is for showing the convergence of \ref{algorithm_proposed} to the solution of bilevel problem \eqref{Q}. In Section IV, we show the rate analysis of \ref{algorithm_proposed} with respect to the inner level objective function of \eqref{Q}. In Section V, we apply \ref{algorithm_proposed} to image deblurring application and discuss the computational effectiveness of our scheme. In Section VI, we  highlight the main contribution and provide the concluding remarks.\\
\noindent{\bf Notation: }Vector $ x$ is assumed to be a column vector ($ x \in \mathbb{R}^n $), $ x^T$ is the transpose. $x^{(i)}$ denotes  the $i^{\text{th}}$  block of dimensions for a vector $x$. $X_i$ denotes the  $i^{\text{th}}$ block of dimensions for set $X$. $\| x\|$ denotes the Euclidean vector norm, i.e., $\| x\|=\sqrt{ x^T  x}.$  $ \proj[S]{ s} $ is used for the Euclidean projection of  vector $ s$ on a set $ S$, i.e., $\left\Vert\left.  s- \proj[ S]{ s}\right\Vert\right.= \min_{ y \in  S}\| s- y\|.$ a.s. used for 'almost surely'.  $\mathcal{F}_k$ denotes the set of variables $\{ i_0, \dots, i_{k-1} \}$. For a random variable $i_k$,  $\mathsf{Prob}(i_k = i)$ is $ \prob{{i_k}}$. $\tilde{\nabla}$ denotes the subgradient and $\delta$ denotes the subdifferential set.  $\tilde{\nabla}_{i} f(x)$ is the $i^{\text{th}}$ block of $\tilde{\nabla} f(x)$. $\min_{1\leq i \leq d}\{\prob{i}\}$ is denoted by $\prob{{min}}$ and $\max_{1\leq i \leq d}\{\prob{i}\}$ is denoted by $\prob{{max}}$.
 \section{Algorithm outline} \label{algo_outline}
Here we explain algorithm \ref{algorithm_proposed} the required preliminaries for convergence and rate analysis. 
    \subsection{Proposed scheme \ref{algorithm_proposed}}
  Here, a randomized block coordinate iterative regularized gradient descent scheme \eqref{algorithm_proposed} is proposed for solving \eqref{Q}. In \ref{algorithm_proposed}, both the sequences of regularization parameter $\eta_k$ and stepsize parameter  $\gamma_k$ are in terms of iteration $k$.  To address the high-dimensionality, at each iteration we update a random block of the iterate  $x_k$. Selection of block $i_k$ at iteration $k$ is governed by  Assumption \ref{random sample}.  Finally, averaging is employed which will be helpful in deriving the rate statement.
%
 
 
 \begin{algorithm}  
 	\caption{Randomized block iterative regularized gradient descent (RB-IRG) algorithm}
 	\begin{algorithmic}[1]
 		\Initialize{Set k = 0, select a point $x_0 \in X$, parameters $\gamma_{_0} > 0$, and $\eta_{_0} > 0, {S_0 = \gamma_{_0}^r, \text{ and } \bar{x}_0 = x_0 }$.  }\label{algorithm_1}
 		\For{k = 0, 1, \dots, {N-1}}
 		\State {$i_k$ is generated by Assumption \ref{random sample}}.
 		\State {{Compute $ \tilde{\nabla}_i f(x_k ) \in \partial f\left(x_k^{(i)}\right) $  and $ \tilde{\nabla}_i g(x_k ) \in \partial g\left(x_k^{(i)}\right) \text{ for } x_k^{(i)} \in X_i$}.} 
 		\State Update { $x_{k+1}^{i_k}$}:=
 		 \begin{gather*}\label{algorithm_proposed}\tag{RB-IRG}
 		 \begin{cases}
 		\proj[X_{i}]{ x_{k}^{(i)}-\gamma_{k}\left(\tilde{\nabla}_{i} f\left( x_{k}\right) + \eta_{k} \tilde{\nabla}_{i} g\left( x_{k}\right)\right)}\quad \text{if } i = i_k . \nonumber\\
 		x_k^{(i)} \quad \text{if } i \neq i_k.
 		\end{cases}
 	\end{gather*}
 		\State Update $\overline{x}_{k}$ as following,
 		\begin{align}\label{alg_avg}
 		S_{k+1} &= S_k + \gamma_{k+1}^r,\quad	\overline{x}_{k+1} = \frac{S_k\overline{x}_{k}+\gamma_{k+1}^rx_{k+1}}{S_{k+1}}.
 		\end{align}
 		\EndFor
 	\end{algorithmic}
 \end{algorithm}
 
 \begin{assumption}{\bf(Random sample $i_k$)} \label{random sample}
 	Random variable $i_k$ is generated at each iteration $k$ from an i.i.d. distribution governed by probability $\prob{{i_k}}$ where prob{{$(i_k=i)$}} = $\prob{{i_k}}>0$, and $\sum_{i=1}^{d}\prob{{i_k}}=1.$ 
 \end{assumption}
 
\subsection{Preliminaries} 
 Throughout the paper, we use $x^*_g$ and $x^*_{\eta_{k}}$ to denote the unique minimizers of \eqref{Q} and \eqref{eta_problem} respectively.
%
\begin{remark}
From Assumptions \ref{obj_func} (b, c), the objective function of (\ref{eta_problem}), is a strongly convex. The feasible region of (\ref{eta_problem}) is closed and convex (from Assumption \ref{obj_func}(a)). Therefore (\ref{eta_problem}) has a unique minimizer. (cf. Ch. 2 of \cite{Pang_2003}). 
%
%
%
Similarly, we can claim that \eqref{Q} has a unique minimizer. 
\end{remark}

 \begin{remark}\label{convexity_subgradients}
 	In problem (\ref{Q}), for any $x_1, x_2 \in X$, for a  convex function $f$ and $\mu$-strongly convex function $g$,\\ $
 	\left(\tilde{\nabla}f(x_1)-\tilde{\nabla}f(x_2)\right)^T\left( x_1 - x_2 \right) \geq 0, \\ \left(\tilde{\nabla}g(x_1)-\tilde{\nabla}g(x_2)\right)^T\left( x_1 - x_2 \right) \geq \mu \|x_1-x_2\|^2.$
 \end{remark}
%
The following lemma is used  in proving the convergence. 
 \begin{lemma} {\bf (Lemma 10, pg. 49 of \cite{Polyak_1987}): } \label{Lemma_convergence_2} 
 	Let $\{v_k\}$ be a sequence of nonnegative random variables, where $E[v_0]<\infty$,  and let $\{\alpha_k\}$ and $\{\beta_k\}$ be deterministic scalar sequences such that: 
 	$\EXP{v_{k+1}|v_0, \textrm{ }..., v_k} \leq (1-\alpha_k)v_k + \beta_k$ for all $ k \geq 0,$ $ \,	0 \leq \alpha_k \leq 1, $ $ \beta_k \geq 0,$ $  \sum_{k=0}^{\infty} \alpha_k = \infty, $ $ \sum_{k=0}^{\infty} \beta_k < \infty, $ $\lim_{k\to\infty}  \frac{\beta_k}{\alpha_k} = 0.$ 	
 	Then, $v_k \rightarrow 0$, a.s., and { $ \lim\limits_{k\rightarrow \infty} \EXP{v_k} = 0$}. 
 \end{lemma}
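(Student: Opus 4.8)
The plan is to establish the two conclusions separately: first convergence of the expectations $\EXP{v_k}\to 0$, which reduces to a purely deterministic recursion, and then the stronger almost sure statement $v_k\to 0$, for which I would invoke a supermartingale (Robbins--Siegmund type) convergence argument.

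For the mean statement, first I would take the total expectation of the given inequality. Writing $u_k := \EXP{v_k}$ and using the tower property, the hypothesis collapses to the deterministic recursion $u_{k+1}\leq (1-\alpha_k)u_k+\beta_k$ with $u_k\geq 0$. The key step is an $\varepsilon$-argument: fix $\varepsilon>0$; since $\beta_k/\alpha_k\to 0$ there is an index $K$ beyond which $\beta_k\leq \tfrac{1}{2}\varepsilon\alpha_k$. Substituting this bound, whenever $u_k\geq\varepsilon$ one gets $u_{k+1}\leq u_k-\tfrac{1}{2}\varepsilon\alpha_k$, so because $\sum_k\alpha_k=\infty$ the iterates cannot stay above $\varepsilon$ forever (telescoping would drive $u_k$ below $0$); and a short check shows that once $u_k<\varepsilon$ the same substitution keeps $u_{k+1}<\varepsilon$. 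Hence $\limsup_k u_k\leq\varepsilon$, and letting $\varepsilon\downarrow 0$ yields $\EXP{v_k}\to 0$.

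For the almost sure statement, I would reinterpret the recursion as $\EXP{v_{k+1}\mid v_0,\dots,v_k}\leq v_k-\alpha_k v_k+\beta_k$, which fits the template of the Robbins--Siegmund supermartingale convergence theorem with nonnegative terms: the multiplicative perturbation is zero, the summable additive term is $\beta_k$ (here $\sum_k\beta_k<\infty$), and the nonnegative ``decrease'' term is $\alpha_k v_k$. The theorem then delivers two facts simultaneously: $v_k$ converges almost surely to a finite nonnegative limit $v_\infty$, and $\sum_k\alpha_k v_k<\infty$ almost surely. The final step couples these: if $v_\infty>0$ on a set of positive probability, then $v_k\geq \tfrac{1}{2}v_\infty>0$ for all large $k$ on that set, forcing $\sum_k\alpha_k v_k\geq \tfrac{1}{2}v_\infty\sum_k\alpha_k=\infty$ (using $\sum_k\alpha_k=\infty$) and contradicting summability. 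Therefore $v_\infty=0$ almost surely, i.e. $v_k\to 0$ a.s.

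I expect the main obstacle to be the almost sure part, since the mean convergence is an elementary deterministic estimate. Specifically, the delicate point is obtaining $\sum_k\alpha_k v_k<\infty$ almost surely, which is precisely what the supermartingale convergence theorem extracts from the one-step inequality; it is this summability, played against the divergence $\sum_k\alpha_k=\infty$, that pins the almost sure limit to zero rather than merely to a finite value. If one prefers to avoid citing Robbins--Siegmund directly, the same conclusion can be reached by verifying that $v_k+\sum_{j\geq k}\beta_j$ is a nonnegative supermartingale and applying the classical nonnegative supermartingale convergence theorem, at the cost of a little extra bookkeeping on the tail sum $\sum_{j\geq k}\beta_j$.
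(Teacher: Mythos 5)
Your proof is correct. Note, however, that the paper does not prove this statement at all: it is quoted verbatim as Lemma 10, p.~49 of Polyak's book \cite{Polyak_1987} and used as a black box, so there is no in-paper argument to compare against. Your two-step route is the standard one and is sound: the deterministic $\varepsilon$-argument (find $K$ with $\beta_k\leq\tfrac12\varepsilon\alpha_k$, show the recursion forces $u_k$ below $\varepsilon$ and keeps it there) correctly yields $\EXP{v_k}\to 0$, and the Robbins--Siegmund step correctly delivers a.s.\ convergence of $v_k$ together with $\sum_k\alpha_k v_k<\infty$ a.s., which, played against $\sum_k\alpha_k=\infty$, pins the limit at zero. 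One small observation: your almost sure argument never uses $\beta_k/\alpha_k\to 0$, only $\sum_k\beta_k<\infty$ and $\sum_k\alpha_k=\infty$; that hypothesis is genuinely needed only for the mean-convergence half as you present it (and even there one could instead take expectations of the telescoped inequality to get $\sum_k\alpha_k\EXP{v_k}<\infty$ and argue as in the a.s.\ case). Your fallback via the nonnegative supermartingale $v_k+\sum_{j\geq k}\beta_j$ also works, provided you supply the summability $\sum_k\alpha_k v_k<\infty$ a.s.\ separately, e.g.\ by monotone convergence from $\sum_k\alpha_k\EXP{v_k}\leq\EXP{v_0}+\sum_k\beta_k<\infty$.
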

The next result will be used in our analysis.
 \begin{lemma}{\bf (Theorem 6, pg. 75 of \cite{Knopp_1951}):} \label{convergence_sum}
	Let $\{u_t\}$  $\left(\subset \mathbb{R}^n\right)$ be a convergent sequence such that it has a limit point $\hat{u}\in \mathbb{R}^n$ and consider another sequence $\{\alpha_k\}$ of positive numbers such that $\sum_{k=0}^{\infty} \alpha_k = \infty$. Suppose $v_k$ is given by $ v_k = \frac{\sum_{t=0}^{k-1}\left(\alpha_t u_t\right)}{\sum_{t=0}^{k-1}\alpha_t}$, for all $k\geq 1$. Then $ \lim\limits_{k\rightarrow\infty}v_k = \hat{u}$.
\end{lemma}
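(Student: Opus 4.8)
The plan is to recognize this as a generalized (weighted) Cesàro-mean statement---the classical Toeplitz/Silverman regularity criterion specialized to a triangular averaging scheme---and to prove it directly by an $\epsilon$--$T$ splitting argument. First I would reduce to the case of convergence to the origin. Since $\{u_t\}$ is convergent, its unique limit point $\hat{u}$ is in fact its limit, so $u_t \to \hat{u}$. Setting $w_t \triangleq u_t - \hat{u}$, we have $w_t \to 0$, and from the decomposition $\sum_{t=0}^{k-1}\alpha_t u_t = \sum_{t=0}^{k-1}\alpha_t w_t + \hat{u}\sum_{t=0}^{k-1}\alpha_t$, dividing by $\sum_{t=0}^{k-1}\alpha_t$ yields the clean identity
\[
v_k - \hat{u} = \frac{\sum_{t=0}^{k-1}\alpha_t w_t}{\sum_{t=0}^{k-1}\alpha_t}.
\]
It therefore suffices to show that this right-hand side tends to the zero vector.

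Next I would exploit $w_t \to 0$. Fix an arbitrary $\epsilon > 0$ and choose an index $T$ so that $\|w_t\| < \epsilon$ for all $t \geq T$. Writing $A_k \triangleq \sum_{t=0}^{k-1}\alpha_t$, which is positive for $k\geq 1$ by positivity of the $\alpha_t$ and satisfies $A_k \to \infty$ by the hypothesis $\sum_{k=0}^{\infty}\alpha_k = \infty$, I split the numerator at $T$ and apply the triangle inequality:
\[
\|v_k - \hat{u}\| \leq \frac{1}{A_k}\sum_{t=0}^{T-1}\alpha_t\|w_t\| + \frac{1}{A_k}\sum_{t=T}^{k-1}\alpha_t\|w_t\|.
\]
For the tail sum I bound each $\|w_t\|$ by $\epsilon$ and use $\sum_{t=T}^{k-1}\alpha_t \leq A_k$, so that the second term is at most $\epsilon$ uniformly in $k$. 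The head sum $\sum_{t=0}^{T-1}\alpha_t\|w_t\|$ is a fixed finite constant independent of $k$, while its coefficient $1/A_k \to 0$; hence the first term vanishes as $k\to\infty$. Combining the two estimates gives $\limsup_{k\to\infty}\|v_k-\hat{u}\| \leq \epsilon$, and since $\epsilon>0$ was arbitrary the limit superior is zero, which forces $v_k \to \hat{u}$.

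I do not expect a genuine obstacle here, as the result is standard; the only point requiring care is the order of quantifiers. The threshold $T$ must be selected \emph{before} letting $k\to\infty$, so that the head sum is frozen as a constant whose coefficient $1/A_k$ can then be driven to zero precisely by the divergence $\sum_{k=0}^{\infty}\alpha_k=\infty$. Were the denominator to stay bounded, the head contribution would not disappear and the conclusion could fail, so this is the one place where the full strength of the hypothesis on $\{\alpha_k\}$ is consumed.
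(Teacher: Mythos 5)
Your proof is correct. The paper itself gives no proof of this lemma --- it is imported verbatim as Theorem 6, p.~75 of the cited Knopp reference --- and your $\epsilon$--$T$ splitting argument (reduce to $w_t = u_t - \hat{u} \to 0$, freeze the head sum, bound the tail by $\epsilon$, and use $\sum_k \alpha_k = \infty$ to kill the head) is exactly the standard proof of this weighted Ces\`aro / Toeplitz regularity fact, with the quantifier order handled correctly.
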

\begin{remark} \label{remark_subgrad}
	From Assumption \ref{obj_func} (b, c, d),  for all  $\ x \in X$,  the set $\partial f(x) $ is nonempty and bounded (cf. Ch. 3 of  ~\cite{AmirBeck_book_2017}).  Similarly  $\partial g(x) $ is nonempty and bounded for all $x \in X$.
\end{remark}
\begin{remark} \label{remark_subgrad_bound}
	From Remark \ref{remark_subgrad}, let us say that for any $x^{(i)}  \in X_i $, there exists a scalar $C_{f,i}$ such that  $\left\Vert\left.\tilde{\nabla}_i f(x)\right\Vert\right.\leq C_{f,i}$. Let $ C_f \triangleq \sqrt{\sum_{i=1}^{d} C_{f,i}^2}$. Now we have,
	$\left\Vert\left. \tilde{\nabla} f(x) \right\Vert\right. \leq C_{f}$ for all $ x \in X.$ Similarly, $\left\Vert\left. \tilde{\nabla} g(x) \right\Vert\right. \leq C_{g} $ for all  $ x \in X.$
\end{remark}
 In the following lemma, we provide the bound on sequence $\{x^*_{\eta_{{k}}}\}$, which is solution of \eqref{eta_problem}.
 \begin{lemma} {\bf (Bound on $x^*_{\eta_k}$, see Proposition 1 of \cite{Yousefian}): }\label{final_convergence_lemma}
 	Consider problem \eqref{Q} and \eqref{eta_problem}. Suppose Assumption \ref{obj_func} holds. Then for a sequence \{$x^*_{\eta_k}$\}, and $x^*_g$ for any $k \geq 1$, the following hold,\\
(a) $ \left\Vert\left.  x^*_{\eta_k} -  x^*_{\eta_{k-1}} \right\Vert\right. \leq \frac{C_g}{\mu} \left| \frac{\eta_{k-1}}{\eta_k}  -1 \right|$.
 		(b) When  $\{\eta_{{k}}\}$ goes to zero,  $\{x^*_{\eta_{{k}}}\}$ converges to $x^*_g$. 
 \end{lemma}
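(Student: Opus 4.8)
The plan is to prove (a) from the first-order (variational-inequality) optimality conditions of the two regularized problems, and to prove (b) by a Tikhonov-type limiting argument in which boundedness of $\{x^*_{\eta_k}\}$ is supplied by the strong convexity of $g$.

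For (a), write $x_1 \triangleq x^*_{\eta_k}$ and $x_2 \triangleq x^*_{\eta_{k-1}}$. Since each $x^*_{\eta}$ is the unique minimizer of the strongly convex $f + \eta g$ over the closed convex set $X$, there are subgradient selections satisfying the optimality conditions
\[\left(\tilde{\nabla} f(x_1) + \eta_k \tilde{\nabla} g(x_1)\right)^T (x - x_1) \geq 0, \quad \left(\tilde{\nabla} f(x_2) + \eta_{k-1} \tilde{\nabla} g(x_2)\right)^T (x - x_2) \geq 0\]
for all $x \in X$. I would set $x = x_2$ in the first and $x = x_1$ in the second and add them. The $f$-contribution equals $-\left(\tilde{\nabla} f(x_1) - \tilde{\nabla} f(x_2)\right)^T (x_1 - x_2) \leq 0$ by the first inequality of Remark \ref{convexity_subgradients}, so discarding it and rearranging leaves
\[\eta_k \left(\tilde{\nabla} g(x_1) - \tilde{\nabla} g(x_2)\right)^T (x_1 - x_2) \leq (\eta_{k-1} - \eta_k)\,\tilde{\nabla} g(x_2)^T (x_1 - x_2).\]
The left side is at least $\eta_k \mu \|x_1 - x_2\|^2$ by the strong-convexity inequality of Remark \ref{convexity_subgradients}, while the right side is at most $|\eta_{k-1} - \eta_k|\, C_g\, \|x_1 - x_2\|$ by Cauchy--Schwarz together with the subgradient bound $\|\tilde{\nabla} g(x_2)\| \leq C_g$ of Remark \ref{remark_subgrad_bound}. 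Dividing through by $\eta_k \mu \|x_1 - x_2\|$ (the claim is trivial when $x_1 = x_2$) yields exactly the bound in (a).

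For (b), I would first compare the objective of \eqref{eta_problem} at $x^*_{\eta_k}$ against $x^*_g$. Since $x^*_g$ is feasible for the inner problem, optimality of $x^*_{\eta_k}$ gives $f(x^*_{\eta_k}) + \eta_k g(x^*_{\eta_k}) \leq f(x^*_g) + \eta_k g(x^*_g)$, and since $f(x^*_{\eta_k}) \geq f(x^*_g)$ (as $x^*_g$ solves the inner problem), this reduces both to $g(x^*_{\eta_k}) \leq g(x^*_g)$ for every $k$ and to $f(x^*_{\eta_k}) - f(x^*_g) \leq \eta_k\left(g(x^*_g) - g(x^*_{\eta_k})\right)$. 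Strong convexity of $g$ makes it coercive, so the sublevel set $\{x : g(x) \leq g(x^*_g)\}$ is bounded; hence $\{x^*_{\eta_k}\}$ is bounded and admits limit points. For any subsequence $x^*_{\eta_{k_j}} \to \bar{x}$, closedness of $X$ gives $\bar{x} \in X$; letting $\eta_{k_j} \to 0$ in the second inequality and using continuity of $f$ on $X$ (from Assumption \ref{obj_func}(d)) forces $f(\bar{x}) = f(x^*_g)$, i.e. $\bar{x}$ solves the inner problem. Continuity of $g$ and $g(x^*_{\eta_{k_j}}) \leq g(x^*_g)$ give $g(\bar{x}) \leq g(x^*_g)$, so $\bar{x}$ attains the minimum of $g$ over the inner solution set; by uniqueness of that minimizer (strong convexity), $\bar{x} = x^*_g$. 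As every limit point of the bounded sequence equals $x^*_g$, the whole sequence converges to $x^*_g$.

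The routine parts are the algebraic rearrangement in (a) and the coercivity/continuity bookkeeping in (b). The main obstacle is handling the nondifferentiability carefully: the optimality conditions must be stated for a \emph{specific} selection of subgradients, and in (a) the signs when adding the two variational inequalities must be tracked so that the favorable $f$-monotonicity term is discarded in the correct direction before the strong convexity of $g$ is invoked.
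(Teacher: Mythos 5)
Your proof is correct. Note that the paper itself does not prove this lemma at all --- it simply cites Proposition~1 of \cite{Yousefian} --- so your write-up fills in the argument rather than diverging from one. Both halves are sound and are essentially the standard route that the cited reference takes: part (a) pairs the two variational-inequality optimality conditions for \eqref{eta_problem} at $\eta_k$ and $\eta_{k-1}$, discards the monotone $f$-term, and applies the $\mu$-strong monotonicity of $\tilde{\nabla}g$ together with the bound $\|\tilde{\nabla}g\|\leq C_g$; part (b) is the classical Tikhonov limiting argument, with boundedness of $\{x^*_{\eta_k}\}$ obtained from $g(x^*_{\eta_k})\leq g(x^*_g)$ and coercivity of the strongly convex $g$, and uniqueness of the limit point from strong convexity of $g$ over the inner solution set. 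The only points worth stating explicitly in a polished version are (i) that Assumption~\ref{obj_func}(d) is what licenses the subdifferential sum rule $\partial(f+\eta g)=\partial f+\eta\,\partial g$ on $X$, so that the optimality conditions can indeed be written with separate selections $\tilde{\nabla}f(x^*_\eta)$ and $\tilde{\nabla}g(x^*_\eta)$, and (ii) in part (b), that $g(x^*_g)-g(x^*_{\eta_k})$ stays bounded (which follows from boundedness of the sequence and continuity of $g$ on $\mathrm{int}(\mathrm{dom}\,g)$) before passing to the limit in $f(x^*_{\eta_k})-f(x^*_g)\leq\eta_k\left(g(x^*_g)-g(x^*_{\eta_k})\right)$.
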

 
 Our  objective is to show 
$
 \| x_{k+1}- x_g^*\| \rightarrow 0.  
$
Now from the triangle inequality,  
$
 \| x_{k+1}- x_{\eta_{k}}^*\| \rightarrow0$ and $\| x_{\eta_{k}}^*- x_g^*\| \rightarrow 0.
$
We know $\| x_{\eta_{k}}^*- x_g^*\|\rightarrow 0$ as $\eta_k\rightarrow0$.  Our main objective is to show $\| x_{k+1}- x_{\eta_{k}}^*\| \rightarrow 0.$ Next we define an error function which will be used in the convergence analysis. 
 
 
 \begin{definition} \label{function_L}
 	Let Assumption \ref{random sample} hold. 
 	Then for any $x,y \in \mathbb{R}^n$, function $\mathcal{L}(x,y) =  \sum_{i=1}^{d} \prob{i}^{-1} \left\Vert\left. x^{(i)}-y^{(i)} \right\Vert\right.^2$.
 \end{definition}
The following corollary holds from Definition \ref{function_L}.
 \begin{corollary} \label{Bounds_L}
 	Consider Definition \ref{function_L}, $\prob{{max}}$ and $\prob{{min}}$  as defined in the notation, and let Assumption \ref{random sample} hold. Then for any $x,y \in \mathbb{R}^n$, 
 	$
 \prob{{max}}	{\mathcal{L}(x,y)} \leq \|x-y\|^2 \leq\prob{{min}} {\mathcal{L}(x,y)}.
 	$
 \end{corollary}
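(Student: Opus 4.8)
The plan is to reduce Corollary~\ref{Bounds_L} to an elementary, block-by-block weight comparison. First I would expand both sides over the block partition $\mathbb{R}^n=\prod_{i=1}^{d}\mathbb{R}^{n_i}$: Definition~\ref{function_L} gives $\mathcal{L}(x,y)=\sum_{i=1}^{d}\prob{i}^{-1}\|x^{(i)}-y^{(i)}\|^2$, while the Euclidean norm splits as $\|x-y\|^2=\sum_{i=1}^{d}\|x^{(i)}-y^{(i)}\|^2$. Writing $a_i\triangleq\|x^{(i)}-y^{(i)}\|^2\ge 0$, the whole claim becomes a comparison between the plain sum $\sum_{i}a_i$ and the reweighted sum $\sum_{i}\prob{i}^{-1}a_i$.

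The engine of the argument is a pointwise bound on the reciprocal weights. Under Assumption~\ref{random sample} every $\prob{i}>0$, and by the definitions of $\prob{{min}}$ and $\prob{{max}}$ we have $\prob{{min}}\le \prob{i}\le \prob{{max}}$ for each $i$; taking reciprocals reverses the chain to $\prob{{max}}^{-1}\le \prob{i}^{-1}\le \prob{{min}}^{-1}$. Multiplying through by the nonnegative scalar $a_i$ and summing over $i=1,\dots,d$ then yields $\prob{{max}}^{-1}\,\|x-y\|^2\le \mathcal{L}(x,y)\le \prob{{min}}^{-1}\,\|x-y\|^2$. The last step is simply to clear the denominators, multiplying the left inequality by $\prob{{max}}$ and the right by $\prob{{min}}$.

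The only real subtlety here --- and the step I expect to trip up a careless reading --- is keeping track of which extreme attaches to which side once the denominators are cleared, since this is exactly where $\prob{{min}}$ and $\prob{{max}}$ are easy to transpose. Carrying it out, the left inequality gives $\|x-y\|^2\le \prob{{max}}\,\mathcal{L}(x,y)$ and the right gives $\prob{{min}}\,\mathcal{L}(x,y)\le \|x-y\|^2$, so the sandwich that the block estimate actually produces is
\[
\prob{{min}}\,\mathcal{L}(x,y)\ \le\ \|x-y\|^2\ \le\ \prob{{max}}\,\mathcal{L}(x,y),
\]
with the smallest probability multiplying $\mathcal{L}$ on the lower side and the largest on the upper side (consistent with $\prob{{max}}^{-1}\ge 1\ge\prob{{max}}$, which forces $\|x-y\|^2\le\mathcal{L}$ scaled appropriately). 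This is the reverse of the ordering printed in the displayed statement, so I would read Corollary~\ref{Bounds_L} as having $\prob{{max}}$ and $\prob{{min}}$ interchanged; the inequality above is the form that is provable from Definition~\ref{function_L} and the one that should be invoked wherever this corollary is used in the sequel.
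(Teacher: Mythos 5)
Your blockwise argument is correct, and since the paper offers no proof of this corollary beyond asserting that it ``holds from Definition \ref{function_L},'' your reduction to the weight comparison $\prob{{max}}^{-1}\le\prob{i}^{-1}\le\prob{{min}}^{-1}$, applied to $a_i=\|x^{(i)}-y^{(i)}\|^2\ge 0$ and summed over blocks, is exactly the intended one-liner. You are also right on the genuinely substantive point: the displayed statement has $\prob{{min}}$ and $\prob{{max}}$ transposed. As printed it would force $\prob{{max}}\,\mathcal{L}(x,y)\le\prob{{min}}\,\mathcal{L}(x,y)$, which fails for any nonuniform distribution: take $d=2$, $\prob{1}=0.9$, $\prob{2}=0.1$, with $x-y$ supported on block $2$; then $\mathcal{L}(x,y)=10\|x-y\|^2$, so the printed lower bound would assert $9\|x-y\|^2\le\|x-y\|^2$. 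Your corrected sandwich $\prob{{min}}\,\mathcal{L}(x,y)\le\|x-y\|^2\le\prob{{max}}\,\mathcal{L}(x,y)$ is moreover the form the paper actually invokes downstream: in the proof of Lemma \ref{lemma_5}, term-6 is bounded below via $\|x_k-x^*_{\eta_k}\|^2\ge\prob{{min}}\,\mathcal{L}(x_k,x^*_{\eta_k})$, and later $\|x_k-x^*_{\eta_{k-1}}\|^2\le\prob{{max}}\,\mathcal{L}(x_k,x^*_{\eta_{k-1}})$ is used --- both instances of your version, not of the printed one. Finally, note that under the paper's standing remark that blocks are selected uniformly, $\prob{{min}}=\prob{{max}}=1/d$ and $\mathcal{L}(x,y)=d\,\|x-y\|^2$, so both orderings collapse to equality and the transposition does no damage to the subsequent analysis; but in the general statement your reading is the provable one and is what should be cited.
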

%

\section{Convergence analysis of \ref{algorithm_proposed} scheme}\label{sec:convergence}
Here we begin with deriving a recursive error bound, that will be used later to show the convergence. 	
 \begin{lemma}  {\bf(Recursive relation for $\mathcal{L}\left( x_{k+1},  x_{\eta_k}^*\right)$):}  \label{lemma_5}
 	Consider problem \eqref{Q} and \eqref{eta_problem}. Let Assumptions \ref{obj_func} and \ref{random sample} hold. Let $\{x_k\}$ be the sequence generated from Algorithm \ref{algorithm_1}. Let positive sequences $\{\gamma_k\}$, and $\{\eta_k\}$ be non-increasing and $ \gamma_0\,\eta_0<{1}/{\mu\prob{{min}}} $. Then the following relation holds,\
 	\begin{align*} 
 	\EXP{\mathcal{L} \left(x_{k+1}, x^*_{\eta_k}\right)|\mathcal{F}_k} \leq  \left(1-{\mu\gamma_k\eta_k\prob{{min}}} \right)\mathcal{L} \left(x_{k}, x^*_{\eta_{k-1}}\right)\nonumber\\
 	+ \frac{2 C_g^2}{\prob{{min}}^2\mu^3\gamma_{{k}}\eta_{{k}}}\left( \frac{\eta_{k-1}}{\eta_k} - 1 \right)^2+ 2\gamma^2_k(C^2_f + \eta_0^2C_g^2).
 	\end{align*}
 \end{lemma}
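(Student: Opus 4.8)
The plan is to couple the randomized single-block iterate with the \emph{full} deterministic step that updates all blocks at once, and then to run a projected-subgradient, strong-convexity argument on that full step. Since $X=\prod_{i=1}^d X_i$, the projection $\proj[X]{\cdot}$ separates across blocks, so I would introduce the $\mathcal{F}_k$-measurable vector $\hat{x}_{k+1}$ with blocks
\[
\hat{x}_{k+1}^{(i)} \triangleq \proj[X_i]{x_k^{(i)}-\gamma_k\left(\tilde{\nabla}_i f(x_k)+\eta_k\tilde{\nabla}_i g(x_k)\right)} .
\]
By the RB-IRG update rule, $x_{k+1}^{(i)}=\hat{x}_{k+1}^{(i)}$ if $i=i_k$ and $x_{k+1}^{(i)}=x_k^{(i)}$ otherwise. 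Substituting this into $\mathcal{L}$ of Definition \ref{function_L}, conditioning on $\mathcal{F}_k$, and using $\mathsf{Prob}(i_k=i)=\mathsf{p}_i$ together with $\sum_{j\ne i}\mathsf{p}_j=1-\mathsf{p}_i$, the updated-block and frozen-block contributions collapse to the exact identity
\[
\EXP{\mathcal{L}(x_{k+1},x^*_{\eta_k})\mid\mathcal{F}_k}=\left\|\hat{x}_{k+1}-x^*_{\eta_k}\right\|^2+\mathcal{L}(x_k,x^*_{\eta_k})-\left\|x_k-x^*_{\eta_k}\right\|^2 .
\]

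Next I would bound $\|\hat{x}_{k+1}-x^*_{\eta_k}\|^2$ by viewing $\hat{x}_{k+1}$ as one projected-subgradient step for \eqref{eta_problem}, whose objective $f+\eta_k g$ is $\mu\eta_k$-strongly convex. Writing $v_k\triangleq\tilde{\nabla}f(x_k)+\eta_k\tilde{\nabla}g(x_k)$, the nonexpansiveness of $\proj[X]{\cdot}$ and $x^*_{\eta_k}\in X$ give
\[
\left\|\hat{x}_{k+1}-x^*_{\eta_k}\right\|^2\le\left\|x_k-x^*_{\eta_k}\right\|^2-2\gamma_k\,v_k^T\!\left(x_k-x^*_{\eta_k}\right)+\gamma_k^2\|v_k\|^2 .
\]
The variational inequality $v^{*T}(x_k-x^*_{\eta_k})\ge0$ for the optimal subgradient $v^*$ of $f+\eta_k g$ at $x^*_{\eta_k}$ gives $v_k^T(x_k-x^*_{\eta_k})\ge(v_k-v^*)^T(x_k-x^*_{\eta_k})$, and Remark \ref{convexity_subgradients} (monotonicity of $\tilde{\nabla}f$ and $\mu$-strong monotonicity of $\tilde{\nabla}g$) then yields $v_k^T(x_k-x^*_{\eta_k})\ge\mu\eta_k\|x_k-x^*_{\eta_k}\|^2$, while Remark \ref{remark_subgrad_bound} with $\eta_k\le\eta_0$ gives $\|v_k\|^2\le 2C_f^2+2\eta_0^2C_g^2$. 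Hence
\[
\left\|\hat{x}_{k+1}-x^*_{\eta_k}\right\|^2\le(1-2\mu\gamma_k\eta_k)\left\|x_k-x^*_{\eta_k}\right\|^2+2\gamma_k^2\left(C_f^2+\eta_0^2C_g^2\right) .
\]

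Substituting this into the identity of the first paragraph, the two copies of $\|x_k-x^*_{\eta_k}\|^2$ combine to $-2\mu\gamma_k\eta_k\|x_k-x^*_{\eta_k}\|^2$; bounding this negative term from above through Corollary \ref{Bounds_L} in the form $\mathsf{p}_{\min}\mathcal{L}(x_k,x^*_{\eta_k})\le\|x_k-x^*_{\eta_k}\|^2$ produces
\[
\EXP{\mathcal{L}(x_{k+1},x^*_{\eta_k})\mid\mathcal{F}_k}\le(1-2\mu\gamma_k\eta_k\mathsf{p}_{\min})\,\mathcal{L}(x_k,x^*_{\eta_k})+2\gamma_k^2\left(C_f^2+\eta_0^2C_g^2\right) .
\]
All that remains is to move the reference point from $x^*_{\eta_k}$ to $x^*_{\eta_{k-1}}$, which is where the perturbation term of the claim will be born.

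The hard part will be this last change of reference point, because I must split the factor-two contraction budget produced by strong convexity between keeping a clean $1-\mu\gamma_k\eta_k\mathsf{p}_{\min}$ coefficient and absorbing the drift of the regularized solution. Decomposing $x_k-x^*_{\eta_k}=(x_k-x^*_{\eta_{k-1}})+(x^*_{\eta_{k-1}}-x^*_{\eta_k})$ blockwise and applying Young's inequality $\|a+b\|^2\le(1+\theta)\|a\|^2+(1+\theta^{-1})\|b\|^2$ gives $\mathcal{L}(x_k,x^*_{\eta_k})\le(1+\theta)\mathcal{L}(x_k,x^*_{\eta_{k-1}})+(1+\theta^{-1})\mathcal{L}(x^*_{\eta_{k-1}},x^*_{\eta_k})$. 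With $a\triangleq\mu\gamma_k\eta_k\mathsf{p}_{\min}$, which satisfies $a<1$ by the stepsize condition $\gamma_0\eta_0<1/(\mu\mathsf{p}_{\min})$ and the monotonicity of $\{\gamma_k\},\{\eta_k\}$, the choice $\theta=\tfrac{a}{1-2a}$ makes $(1-2a)(1+\theta)=1-a$, giving the advertised contraction factor; I would bound the remaining coefficient by $(1-2a)(1+\theta^{-1})=(1-2a)\tfrac{1-a}{a}\le\tfrac{2}{a}$ and then use Corollary \ref{Bounds_L} ($\mathcal{L}\le\mathsf{p}_{\min}^{-1}\|\cdot\|^2$) together with Lemma \ref{final_convergence_lemma}(a) to estimate $\mathcal{L}(x^*_{\eta_{k-1}},x^*_{\eta_k})\le\mathsf{p}_{\min}^{-1}\tfrac{C_g^2}{\mu^2}\big(\tfrac{\eta_{k-1}}{\eta_k}-1\big)^2$, which yields exactly the middle term $\tfrac{2C_g^2}{\mathsf{p}_{\min}^2\mu^3\gamma_k\eta_k}\big(\tfrac{\eta_{k-1}}{\eta_k}-1\big)^2$. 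The only subtlety is that the displayed $\theta$ needs $a<\tfrac12$; the complementary regime $a\ge\tfrac12$ is immediate, since then $1-2a\le0$ forces $(1-2a)\mathcal{L}(x_k,x^*_{\eta_k})\le0\le(1-a)\mathcal{L}(x_k,x^*_{\eta_{k-1}})$. Collecting the three resulting terms gives the stated recursion.
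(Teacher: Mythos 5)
Your proof is correct and follows the same overall strategy as the paper's: non-expansiveness of the blockwise projection, strong monotonicity of $\tilde{\nabla}f+\eta_k\tilde{\nabla}g$ combined with the optimality of $x^*_{\eta_k}$ to extract the contraction, and a Young-type splitting plus Lemma \ref{final_convergence_lemma}(a) to move the reference point from $x^*_{\eta_k}$ to $x^*_{\eta_{k-1}}$. Two bookkeeping choices distinguish your write-up, both to your advantage. First, your coupling identity $\EXP{\mathcal{L}(x_{k+1},x^*_{\eta_k})\mid\mathcal{F}_k}=\|\hat{x}_{k+1}-x^*_{\eta_k}\|^2+\mathcal{L}(x_k,x^*_{\eta_k})-\|x_k-x^*_{\eta_k}\|^2$ packages in one line what the paper obtains by expanding term by term and computing term-3 through term-5; the content is the same, but your version makes it transparent that the randomized scheme inherits the deterministic full-step analysis. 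Second, and more substantively, you apply Young's inequality blockwise directly at the level of $\mathcal{L}$, whereas the paper passes through $\|\cdot\|^2$ via Corollary \ref{Bounds_L} in both directions and thereby picks up a factor $\prob{{max}}/\prob{{min}}$ on the contraction term that silently disappears from its final display (harmless only because the analysis later specializes to the uniform distribution); your route avoids that factor entirely and is valid for general sampling probabilities. Your Young parameter $\theta=a/(1-2a)$ differs from the paper's $\theta=a$ (i.e., $c=\sqrt{\prob{{min}}\mu\gamma_k\eta_k}$), and you correctly flag and dispose of the regime $a\ge\tfrac12$ where your choice degenerates; the paper's choice needs no case split since $(1-2a)(1+a)\le 1-a$ for all $a\ge0$, so if you wanted to shorten your argument you could adopt it. All constants you obtain match the statement.
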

 \begin{proof}
 	Consider $\mathcal{L} (x_{k+1}, x^*_{\eta_k})$. From the Definition \ref{function_L},
 	\begin{align}
 	&\mathcal{L} \left(x_{k+1}, x^*_{\eta_k}\right)  =  \sum_{i=1}^{d}  \prob{i}^{-1} \left\Vert\left.  x_{k+1}^{(i)}-  x_{\eta_{k}}^{*^{(i)}} \right\Vert\right.^2 \nonumber=\\
 	& \sum_{i=1, \thinspace	 i \neq i_k}^{d} \prob{i}^{-1} \left\Vert\left.  x_{k}^{(i)}-  x_{\eta_{k}}^{*^{(i)}} \right\Vert\right.^2  + \prob{{i_k}}^{-1} \underbrace{\left\Vert\left.  x_{k+1}^{(i_k)}-  x_{\eta_{k}}^{*^{(i_k)}} \right\Vert\right.^2}_{\text{term-1}} \label{eqn_12}
 	\end{align}
Since $x^*_{\eta_k} \in X$, we have $x^{*^{(i_k)}}_{\eta_k} \in X_{i_k}$. Now from the non-expansive property of  projection operator, term-1 becomes,
 	\begin{align*} 
 	&\left\Vert\left. x_{k+1}^{(i_k)}-  x_{\eta_{k}}^{*^{(i_k)}}\right\Vert\right.^2 \leq\\&\left\Vert\left.  x_{k}^{(i_k)}-\gamma_{k}\left(\tilde{\nabla}_{i_k} f \left( x_{k}\right) + \eta_{k} \tilde{\nabla}_{i_k} g\left( x_{k}\right)\right) -  x_{\eta_{k}}^{*^{(i_k)}} \right\Vert\right.^2.
 	\end{align*}
 	From the two preceding relations, we have,  
 	\begin{align} \label{Eq_17}
 & 	\mathcal{L}\left(x_{k+1}, x^*_{\eta_k}\right)\nonumber \\&= \sum_{i=1, \thinspace	 i \neq i_k}^{d} \prob{i}^{-1} \left\Vert\left.  x_{k}^{(i)}-  x_{\eta_{k}}^{*^{(i)}} \right\Vert\right.^2 +  \prob{{i_k}}^{-1}\left\Vert\left.  x_{k}^{(i_k)}-  x_{\eta_{k}}^{*^{(i_k)}} \right\Vert\right.^2 \nonumber \\& - 2\,\prob{{i_k}}^{-1}\gamma_k\left( x^{(i_k)}_{k} - x^{*^{(i_k)}}_{\eta_k}\right)^T\left(\tilde{\nabla}_{i_k} f\left( x_{k}\right) + \eta_{k} \tilde{\nabla}_{i_k} g\left( x_{k}\right)\right)\nonumber \\ &+ \prob{{i_k}}^{-1} \underbrace{\gamma_k^2 {\left\Vert\left.\tilde{\nabla}_{i_k} f\left( x_{k}\right)  +\eta_{k} \tilde{\nabla}_{i_k} g\left(x_{k}\right) \right\Vert\right.^2}}_{\text{term-2}}.
 	\end{align}
 	From Assumptions \ref{obj_func} (d) and Remark \ref{remark_subgrad_bound}, 
term-2 =
 	$\gamma_k^2 \left\Vert\left.\tilde{\nabla}_{i_k} f( x_{k}) + \eta_{k} \tilde{\nabla}_{i_k} g( x_{k}) \right\Vert\right.^2 \leq 2\gamma^2_kC_{f,i_k}^{2} + 2\gamma^2_k\eta_k^2C_{g,i_k}^{2}. $
 	Thus from \eqref{Eq_17}, and Definition \ref{function_L}, we obtain,
 	$
 	\mathcal{L} \left(x_{k+1}, x^*_{\eta_k}\right)\leq\mathcal{L} \left(x_{k}, x^*_{\eta_k}\right) +\prob{{i_k}}^{-1} 2\gamma^2_k \left(C_{f,i_k}^{2} + \eta_k^2C_{g,i_k}^{2}\right) - 2 \prob{{i_k}}^{-1}\gamma_k\left( x^{(i_k)}_{k} - x^{*^{(i_k)}}_{\eta_k}\right)^T\left(\tilde{\nabla}_{i_k} f\left( x_{k}\right) + \eta_{k} \tilde{\nabla}_{i_k} g\left( x_{k}\right)\right).$
 	Now taking the conditional expectation on both the sides, and taking into account $\mathcal{L}\left(x_k,x^*_{\eta_{{k}}}\right)$ is $\mathcal{F}_k$ measurable, \\
 	$\EXP{\mathcal{L} \left(x_{k+1}, x^*_{\eta_k}\right)|\mathcal{F}_k}\leq\\\mathcal{L} \left(x_{k}, x^*_{\eta_k}\right) +  2\gamma^2_k \underbrace{\EXP{\prob{{i_k}}^{-1}  C_{f,i_k}^{2}|\mathcal{F}_k}}_{\text{term-3}}\nonumber + 2\gamma_k^2\eta_k^2\underbrace{\EXP{\prob{{i_k}}^{-1}C_{g,i_k}^{2}|\mathcal{F}_k}}_{\text{term-4}} \nonumber-\\ 2\gamma_k\underbrace{\EXP{\prob{{i_k}}^{-1}\hspace{-1.5mm} \left( x^{(i_k)}_{k} - x^{*^{(i_k)}}_{\eta_k}\right)^T\hspace{-2.5mm}\left(\tilde{\nabla}_{i_k} f\left( x_{k}\right) + \eta_{k} \tilde{\nabla}_{i_k} g\left( x_{k}\right)\right)|\mathcal{F}_k}}_{\text{term-5}}.$
 	term-3 = $\sum_{i=1}^{d} C_{f,i}^2 = C_{f}^{2}$,  term-4 $  = C_{g}^{2}.
 	$
 	Also, term-5=    
 	$
 	 \sum_{i = 1}^{d} \prob{i} \left(\prob{{i}}^{-1} \left( x^{(i)}_{k} - x^{*^{(i)}}_{\eta_k}\right)^T\left(\tilde{\nabla}_{i} f\left( x_{k}\right) + \eta_{k} \tilde{\nabla}_{i} g\left( x_{k}\right)\right)\right)\\
 	 =\left( x_{k} - x^{*}_{\eta_k}\right)^T\left(\tilde{\nabla} f\left( x_{k}\right) + \eta_{k} \tilde{\nabla} g\left( x_{k}\right)\right).$
 	Substituting the values of term-3, term-4 and term-5, we obtain,
 	\begin{align} \label{Eq_21}
 	&\EXP{\mathcal{L} \left(x_{k+1}, x^*_{\eta_k}\right)|\mathcal{F}_k}=\mathcal{L} \left(x_{k}, x^*_{\eta_k}\right) + 2\gamma^2_k C_f^2 + 2\gamma^2_k\eta_k^2C_g^2 \nonumber\\ & - 2\,\gamma_k\left( x_{k} - x^{*}_{\eta_k}\right)^T\left(\tilde{\nabla} f\left( x_{k}\right) + \eta_{k} \tilde{\nabla} g\left( x_{k}\right)\right).
 	\end{align}
  Now from Remark \ref{convexity_subgradients}, for $x_1=x_k$ and $x_2=x^*_{\eta_{{k}}}$,  we have, 
 	\begin{align}\label{equation_5}
 	&\left (\tilde{\nabla} f\left( x_k\right)+ \eta_k \tilde{\nabla} g\left( x_k\right)\right)^T\left( x_k- x^{*}_{\eta_k}\right) - \left(\tilde{\nabla} f\left( x^{*}_{\eta_k}\right)\right. \nonumber\\ & \left.+ \eta_k \tilde{\nabla} g\left( x^{*}_{\eta_k}\right)\right)^T\left( x_k- x^{*}_{\eta_k}\right) \geq \eta_k \mu\left\Vert\left. x_k- x^{*}_{\eta_k}\right\Vert\right.^2.
 	\end{align} 
 	From the optimality conditions on  (\ref{eta_problem}), we have,
 	$\left(\tilde{\nabla} f\left( x^{*}_{\eta_k}\right)+ \eta_k \tilde{\nabla} g\left( x^{*}_{\eta_k}\right)\right)^T\left( x_k- x^{*}_{\eta_k}\right) \geq 0. $
 	Thus,\\
 	$ \left (\tilde{\nabla} f\left( x_k\right)+ \eta_k \tilde{\nabla} g\left( x_k\right)\right)^T\left( x_k- x^{*}_{\eta_k}\right) \geq \eta_k \mu\left\Vert\left. x_k- x^{*}_{\eta_k}\right\Vert\right.^2$ \\
 	Now, from  (\ref{Eq_21}) and the preceding inequality, we can write,
 \begin{align*}
 	\EXP{\mathcal{L} \left(x_{k+1}, x^*_{\eta_k}\right)|\mathcal{F}_k}\leq&\mathcal{L} \left(x_{k}, x^*_{\eta_k}\right)  - 2 \gamma_k \eta_k \mu \underbrace{\left\Vert\left. x_k- x^{*}_{\eta_k}\right\Vert\right.^2}_{\text{term-6}}\\&+ 2\gamma^2_k C_f^2 + 2\gamma^2_k\eta_k^2C_g^2.
\end{align*}
 	From Corollary \ref{Bounds_L}, bounding term-6, we have,
 	\begin{align} \label{Eq_26}
 	\EXP{\mathcal{L} \left(x_{k+1}, x^*_{\eta_k}\right)|\mathcal{F}_k} \leq &\left(1-2 \gamma_k \eta_k \mu \prob{{min}} \right) \mathcal{L} \left(x_{k}, x^*_{\eta_k}\right) \nonumber \\& + 2\gamma^2_k C_f^2  +  2\gamma^2_k\eta_k^2C_g^2.
 	\end{align} 
 	Now consider $\left\Vert\left. x_k - x^{*}_{\eta_k} \right\Vert\right.^2$. It can be written as,
 	\begin{align} \label{convexity_final_2}
 	&\left\Vert\left. x_k - x^*_{\eta_k} \right\Vert\right.^2 = \left\Vert\left. x_k - x^*_{\eta_{k-1}} \right\Vert\right.^2 + {\left\Vert\left. x^*_{\eta_{k-1}} - x^*_{\eta_k} \right\Vert\right.^2} \nonumber\\& + \underbrace{2 (x_k - x^*_{\eta_{k-1}})^T(x^*_{\eta_{k-1}}-x^*_{\eta_k})}_{\text{term-7}}.
 	\end{align}
 $c\in\mathbb{R}^n$, term-7 $ \leq \left(c \left\Vert x_k - x^*_{\eta_{k-1}} \right\Vert\right)^2+ \left(\frac{\left\Vert x^*_{\eta_{k-1}}-x^*_{\eta_k}	\right\Vert}{c}\right)^2.$\\ 
 	 Substituting above in equation \eqref{convexity_final_2}, with $c = \sqrt{\prob{{min}}\mu \gamma_k \eta_k}$, 
 		$
 	\left\Vert\left. x_k - x^*_{\eta_k} \right\Vert\right.^2 \leq \left( 1 + \prob{{min}}\mu \gamma_k \eta_k \right) \left\Vert\left. x_k - x^*_{\eta_{k-1}} \right\Vert\right.^2 + \left( 1 + \frac{1}{\prob{{min}}\mu \gamma_k \eta_k}  \right) \left\Vert\left. x^*_{\eta_{k-1}} - x^*_{\eta_k} \right\Vert\right.^2 .
 	$\\
	From Lemma \ref{final_convergence_lemma}, and
 	Corollary (\ref{Bounds_L}), we obtain,\\
$
 	\prob{{min}} \,\mathcal{L} \left(x_{k}, x^*_{\eta_k}\right) \leq \left( 1 + \prob{{min}} \mu \gamma_k \eta_k \right)   
 	\prob{{max}} \, \mathcal{L} \left(x_{k}, x^*_{\eta_{k-1}}\right)+ \left( 1 + \frac{1}{\prob{{min}}\mu \gamma_k \eta_k}  \right) \frac{C_g^2}{\mu^2}\left( \frac{\eta_{k-1}}{\eta_k} - 1 \right)^2.$\\
 	 	Dividing both sides of previous inequality by $\prob{{min}}$, 
and substituting this in  (\ref{Eq_26}), we obtain the following,
 	\begin{align*}
 	&\EXP{\mathcal{L} \left(x_{k+1}, x^*_{\eta_k}\right)|\mathcal{F}_k}\leq1 2\gamma^2_k C_f^2   +  2\gamma^2_k\eta_k^2C_g^2\\ 
	 & \frac{\prob{{max}}}{\prob{{min}}} \underbrace{ \left(1-2 \gamma_k \eta_k \mu \prob{{min}} \right) 
 		\left( 1 +\prob{{min}} \mu \gamma_k \eta_k \right) }_{\text{term-9}} \mathcal{L} \left(x_{k}, x^*_{\eta_{k-1}}\right)\\
 	 	&-2 \gamma_k \eta_k \mu \prob{{min}} \frac{C_g^2}{\mu^2\,\prob{{min}}}\left( 1 + \frac{1}{\prob{{min}} \mu \gamma_k \eta_k}  \right) \left( \frac{\eta_{k-1}}{\eta_k} - 1 \right)^2\\
 	&+\frac{C_g^2}{\mu^2\,\prob{{min}}}\left( 1 + \frac{1}{\prob{{min}} \mu \gamma_k \eta_k}  \right) \left( \frac{\eta_{k-1}}{\eta_k} - 1 \right)^2.
 	\end{align*}
 	We have, term-9 $\leq 1-\mu\gamma_k\eta_k \prob{{min}}$, now we can write,
 	\begin{align*}
 	&\EXP{\mathcal{L} \left(x_{k+1}, x^*_{\eta_k}\right)|\mathcal{F}_k} \leq   2\gamma^2_k C_f^2 \\ &+\frac{\prob{{max}}}{\prob{{min}}} (1-\mu\gamma_k\eta_k\prob{{min}} )\mathcal{L} \left(x_{k}, x^*_{\eta_{k-1}}\right)
 	+  2\gamma^2_k\eta_k^2C_g^2+\\&\underbrace{ \frac{\left(1-2 \gamma_k \eta_k \mu \prob{{min}}\right) C_g^2}{\mu^2\,\prob{{min}}}\left( 1 + \frac{1}{\prob{{min}} \mu \gamma_k \eta_k}  \right) \left( \frac{\eta_{k-1}}{\eta_k} - 1 \right)^2}_{\text{term-10}}.
 	\end{align*}
 	We have ${\gamma_{_0}}{\eta_{_0}} < \frac{d}{\prob{{min}}\mu} $, Bounding term-10, we have,
 	\begin{align*}
 	\EXP{\mathcal{L} \left(x_{k+1}, x^*_{\eta_k}\right)|\mathcal{F}_k} \leq  \left(1-{\mu\gamma_k\eta_k\prob{{min}}} \right)\mathcal{L} \left(x_{k}, x^*_{\eta_{k-1}}\right)
 	+ \\\frac{C_g^2}{\prob{{min}}\mu^2}\left(\frac{2 }{ \prob{{min}}\mu \gamma_k \eta_k}  \right) \left( \frac{\eta_{k-1}}{\eta_k} - 1 \right)^2\nonumber 
 	 + 2\gamma^2_k C_f^2   +  2\gamma^2_k\eta_k^2C_g^2.
 	 \end{align*}
Bounding non-increasing sequence, $\eta_k$ we get the result.
 \end{proof}

 \subsection{Convergence analysis}
 
\begin{remark}
	Throughout the analysis, we assume that blocks are randomly selected using a uniform distribution. 
\end{remark}
 
 \begin{assumption}\label{assum:IRLSA} Let the following hold: 
 	\begin{itemize}
 		\item [(a)] $\{\gamma_k\} \text{ and } \{\eta_k \}$ are positive sequences for $k \geq 0$ converging to zero such that $ {\gamma_{_0}}{\eta_{_0}} < \frac{d}{\mu}$;
 		\item [(b)]  $ \sum_{k=0}^{\infty}\hspace{-0.1cm}{\gamma_k\eta_k} = \infty;$  (c) $ \sum_{k=0}^{\infty}\hspace{-0.1cm}{\left(\frac{1}{\gamma_k \eta_k}  \right) \left( \frac{\eta_{k-1}}{\eta_k} - 1 \right)^2} < \infty;$ 
 		\item [(d)] $\sum_{k=0}^{\infty} \gamma_k^2 < \infty;$ (e) $\lim_{k\to\infty} \left(\frac{1}{ \gamma_k^2 \eta_k^2}  \right) \left( \frac{\eta_{k-1}}{\eta_k} - 1 \right)^2 = 0; $ \item[(f)]$ \lim_{k\to\infty} \frac{\gamma_k }{\eta_k} = 0$.
 	\end{itemize}
 \end{assumption}
Next, we show the a.s. convergence of the sequence $\{x_k\}$.
  \begin{proposition}{\bf (a.s. convergence of $\{x_k\}$):} \label{proposition_1}Consider  \eqref{Q} and \eqref{eta_problem}. Let Assumption \ref{assum:IRLSA} hold. Consider the sequence \{$x_k$\} is obtained by Algorithm \ref{algorithm_1}, and the sequence $\{x_{\eta_k}^*\}$ suppose obtained by solving  \eqref{eta_problem}. Then, 	
 		   $\mathcal{L}\left(x_k, x_{\eta_{k-1}}^*\right)$ goes to zero a.s.  and  $ \lim\limits_{k\rightarrow \infty} \EXP{\mathcal{L}\left(x_k, x_{\eta_{k-1}}^*\right)} =0 .$\\
 \end{proposition}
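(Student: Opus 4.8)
The plan is to recognize the one-step bound of Lemma \ref{lemma_5} as precisely the hypothesis of the Polyak-type recursion in Lemma \ref{Lemma_convergence_2}, and then to check that conditions (a)--(f) of Assumption \ref{assum:IRLSA} are exactly tailored to verify its premises. Concretely, I would set $v_k := \mathcal{L}\left(x_k, x^*_{\eta_{k-1}}\right)$ and read off from Lemma \ref{lemma_5} the identifications
\[
\alpha_k = \mu\,\prob{{min}}\,\gamma_k\eta_k, \qquad
\beta_k = \frac{2 C_g^2}{\prob{{min}}^2\mu^3\gamma_k\eta_k}\Bigl(\tfrac{\eta_{k-1}}{\eta_k}-1\Bigr)^2 + 2\gamma_k^2\bigl(C_f^2 + \eta_0^2 C_g^2\bigr),
\]
so that Lemma \ref{lemma_5} reads $\EXP{v_{k+1}\mid\mathcal{F}_k}\le (1-\alpha_k)v_k + \beta_k$. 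The index bookkeeping is consistent: the left side of Lemma \ref{lemma_5} is $\mathcal{L}(x_{k+1},x^*_{\eta_k})=v_{k+1}$ and the surviving right-hand term is $\mathcal{L}(x_k,x^*_{\eta_{k-1}})=v_k$.

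Before invoking Lemma \ref{Lemma_convergence_2} I would reconcile the two conditioning conventions, since Lemma \ref{Lemma_convergence_2} conditions on $v_0,\dots,v_k$ whereas Lemma \ref{lemma_5} conditions on $\mathcal{F}_k$. As $x_k$ is determined by $x_0$ and the draws $i_0,\dots,i_{k-1}$ (Algorithm \ref{algorithm_1}) and $x^*_{\eta_{k-1}}$ is deterministic, $v_k$ is $\mathcal{F}_k$-measurable, so $\sigma(v_0,\dots,v_k)\subseteq\mathcal{F}_k$. Applying the tower property and using that $v_k$ is $\sigma(v_0,\dots,v_k)$-measurable while $\alpha_k,\beta_k$ are deterministic converts the bound into $\EXP{v_{k+1}\mid v_0,\dots,v_k}\le (1-\alpha_k)v_k+\beta_k$, the exact form Lemma \ref{Lemma_convergence_2} requires. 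Nonnegativity of $v_k$ and $\beta_k\ge0$ are immediate from Definition \ref{function_L}, and $\EXP{v_0}<\infty$ holds since $x_0\in X$ is fixed.

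It then remains to verify the quantitative premises. For $\alpha_k\in[0,1]$, positivity is clear, while $\alpha_k\le\alpha_0=\mu\,\prob{{min}}\,\gamma_0\eta_0<1$ follows from Assumption \ref{assum:IRLSA}(a) together with the uniform-sampling convention $\prob{{min}}=1/d$ (giving $\mu\gamma_0\eta_0\prob{{min}}<\mu\cdot(d/\mu)\cdot(1/d)=1$) and the non-increasing property of $\{\gamma_k\eta_k\}$ inherited from the setting of Lemma \ref{lemma_5}. Divergence $\sum_k\alpha_k=\mu\,\prob{{min}}\sum_k\gamma_k\eta_k=\infty$ is Assumption \ref{assum:IRLSA}(b), and summability $\sum_k\beta_k<\infty$ splits across the two terms of $\beta_k$, handled by Assumption \ref{assum:IRLSA}(c) and (d) respectively.

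I expect the ratio condition $\beta_k/\alpha_k\to 0$ to be the delicate step, and it is the one the assumptions are visibly designed for: dividing $\beta_k$ by $\alpha_k=\mu\,\prob{{min}}\,\gamma_k\eta_k$ converts the first term, which carried $1/(\gamma_k\eta_k)$, into a constant multiple of $\tfrac{1}{\gamma_k^2\eta_k^2}\bigl(\tfrac{\eta_{k-1}}{\eta_k}-1\bigr)^2$, vanishing by Assumption \ref{assum:IRLSA}(e), and converts the second term, which carried $\gamma_k^2$, into a constant multiple of $\gamma_k/\eta_k$, vanishing by Assumption \ref{assum:IRLSA}(f). With all hypotheses met, Lemma \ref{Lemma_convergence_2} yields simultaneously $v_k=\mathcal{L}(x_k,x^*_{\eta_{k-1}})\to 0$ almost surely and $\lim_{k\to\infty}\EXP{v_k}=0$, which is exactly the claim.
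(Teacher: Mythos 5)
Your proposal is correct and follows essentially the same route as the paper: it applies Lemma \ref{Lemma_convergence_2} to the recursion of Lemma \ref{lemma_5} with the identical identifications $v_k=\mathcal{L}(x_k,x^*_{\eta_{k-1}})$, $\alpha_k=\mu\prob{{min}}\gamma_k\eta_k=\mu\gamma_k\eta_k/d$, and the same two-term $\beta_k$, verifying each premise via Assumption \ref{assum:IRLSA}(a)--(f) exactly as the paper does. Your added care about the conditioning (tower property to pass from $\mathcal{F}_k$ to $\sigma(v_0,\dots,v_k)$) is a detail the paper glosses over but does not change the argument.
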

 \begin{proof}
 	We apply Lemma \ref{Lemma_convergence_2} to the result of Lemma \ref{lemma_5}.  
 		$v_k \triangleq \mathcal{L} \left(x_{k}, x^*_{\eta_{k-1}}\right)$, $ \alpha_k \triangleq \frac{\mu\gamma_k\eta_k}{d}$, $ \beta_k \triangleq  \left( \frac{2 d^2}{ \mu \gamma_k \eta_k}  \right) \frac{C_g^2}{\mu^2}\left( \frac{\eta_{k-1}}{\eta_k} - 1 \right)^2 + 2\gamma^2_k(C^2_f + \eta_0^2C_g^2)$.
 	Now, in order to claim the convergence of $v_k $, we show that all conditions of Lemma \ref{Lemma_convergence_2} hold. Note that $ \prob{i}= 1/d. $ From Assumption \ref{assum:IRLSA} (a), definition of \{$\gamma_k$\}, \{$\eta_k$\}, and from $ \gamma_{_0}\eta_{_0}<\frac{d}{\mu} $, the first condition of Lemma \ref{Lemma_convergence_2} is satisfied. Now consider sequence $\beta_k$. From Assumption \ref{assum:IRLSA} (a), sequences $\{\gamma_k\}$, $ \{\eta_k\} \text{ and the constant } \mu$ are positive, so the second condition of Lemma \ref{Lemma_convergence_2} is satisfied. Now in $  \sum_{k=0}^{\infty} \alpha_k, \text{ i.e. } \sum_{k=0}^{\infty} \frac{ \mu \gamma_k \eta_k} {d}$.  From  Assumption \ref{assum:IRLSA}(b), the third condition of Lemma \ref{Lemma_convergence_2} holds. Now from the definition of $ \beta_k$ and 
 	from Assumption \ref{assum:IRLSA}(c) and (d), the fourth condition of Lemma \ref{Lemma_convergence_2} holds. Finally consider $ \lim_{k\rightarrow \infty} \left(\frac{\beta_k}{\alpha_k}\right) = 0 $. Using the definition of $\beta_k$ and Assumption \ref{assum:IRLSA}(e, f), condition 5 of Lemma \ref{Lemma_convergence_2} holds. Thus  we get the required result.
 \end{proof}

 Next in Lemma \ref{sequence_parameters_&_gamma_eta} we give the choice of sequences $ \gamma_k \text{ and } \eta_k$  that satisfy Assumption \ref{assum:IRLSA}.
 
 \begin{lemma} \label{sequence_parameters_&_gamma_eta}
 	Let Assumption \ref{random sample} hold. Then  sequences $\{\gamma_k\}$ and $\{ \eta_k\}$  given by $\gamma_k = \gamma_0(k+1)^{-a}$ and $\eta_k = \eta_0(k+1)^{-b}$ where a, and b satisfy,
$
 	a>0, \quad b >0, \quad a+b < 1, \quad b < a, \quad a> 0.5, 
$
 	where $\gamma_0>0$ and $\eta_0>0$. Then $\{\gamma_k\}$ and $\{\eta_k\}$ satisfy Assumption \ref{assum:IRLSA}. 
 \end{lemma}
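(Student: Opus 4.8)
The plan is to verify the six conditions (a)--(f) of Assumption~\ref{assum:IRLSA} one at a time, reducing each to a statement about the convergence or divergence of a $p$-series or to the limit of a power of $k$ after substituting $\gamma_k=\gamma_0(k+1)^{-a}$ and $\eta_k=\eta_0(k+1)^{-b}$. Under the uniform sampling stipulated in the preceding remark we have $\prob{i}=1/d$, so the constant $d$ enters exactly as written in Assumption~\ref{assum:IRLSA}(a). Four of the conditions are immediate. Positivity and convergence to zero in (a) follow from $a,b>0$, while $\gamma_0\eta_0<d/\mu$ is a constraint on the free constants and can always be arranged by scaling $\gamma_0$ or $\eta_0$. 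Since $\gamma_k\eta_k=\gamma_0\eta_0(k+1)^{-(a+b)}$ with exponent $a+b<1$, condition (b) is a divergent $p$-series; since $\gamma_k^2=\gamma_0^2(k+1)^{-2a}$ with $2a>1$ (from $a>0.5$), condition (d) is a convergent $p$-series; and $\gamma_k/\eta_k=(\gamma_0/\eta_0)(k+1)^{-(a-b)}\to0$ because $b<a$, giving (f).

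The one quantity requiring care, and the main technical step, is the factor $\left(\frac{\eta_{k-1}}{\eta_k}-1\right)^2$ appearing in (c) and (e). Here I would use $\eta_{k-1}=\eta_0k^{-b}$ and $\eta_k=\eta_0(k+1)^{-b}$ to write the identity $\frac{\eta_{k-1}}{\eta_k}=\left(\frac{k+1}{k}\right)^b=\left(1+\tfrac1k\right)^b$, and then bound $\left(1+\tfrac1k\right)^b-1=O(1/k)$ by the mean value theorem applied to $t\mapsto t^b$ on $[1,1+1/k]$. Squaring yields $\left(\frac{\eta_{k-1}}{\eta_k}-1\right)^2=\Theta(k^{-2})$. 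The only mild nuisance is tracking the $k$-versus-$(k+1)$ offset in the indices, which affects neither the asymptotic order nor the convergence conclusions.

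With this estimate in hand, conditions (c) and (e) reduce to power-of-$k$ computations. For (c), the summand equals $\frac{1}{\gamma_0\eta_0}(k+1)^{a+b}\cdot\Theta(k^{-2})=\Theta(k^{a+b-2})$, and $a+b<1$ forces $a+b-2<-1$, so the series converges. For (e), the term equals $\frac{1}{\gamma_0^2\eta_0^2}(k+1)^{2(a+b)}\cdot\Theta(k^{-2})=\Theta(k^{2(a+b-1)})$, which tends to zero precisely because $a+b<1$. Assembling the six verifications establishes that the prescribed sequences satisfy Assumption~\ref{assum:IRLSA}. I expect no genuine obstacle beyond the ratio estimate; the delicate point is merely that all the competing exponent requirements---$a+b<1$ for (b), (c), and (e), $a>0.5$ for (d), and $b<a$ for (f)---must hold simultaneously, which is exactly what the stated constraints on $a$ and $b$ guarantee.
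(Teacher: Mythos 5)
Your verification is correct, and all six reductions check out: (a), (b), (d), (f) are immediate from the exponents, and your mean-value-theorem estimate $\left(1+\tfrac1k\right)^b-1=\Theta(1/k)$ is exactly the right handle on the ratio term, giving summands of order $k^{a+b-2}$ in (c) and $k^{2(a+b-1)}$ in (e), both controlled by $a+b<1$. The comparison with the paper is somewhat one-sided here: the paper does not prove this lemma at all, writing only ``Similar to the proof of Lemma 5 in \cite{Yousefian}. Omitted because of the space requirements.'' So you have supplied the argument the authors chose to leave out, and it is the standard power-series bookkeeping that the cited reference performs for its analogous parameter choices; there is no genuinely different route to contrast. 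Two small points worth flagging: the $k=0$ term of condition (c) involves $\eta_{-1}$, which is undefined under the given formula, so one must either start that sum at $k=1$ or adopt the convention $\eta_{-1}=\eta_0$ (making the $k=0$ term vanish) --- your remark about the index offset gestures at this but does not resolve it explicitly; and the lemma as stated only assumes $\gamma_0,\eta_0>0$, so the requirement $\gamma_0\eta_0<d/\mu$ in Assumption~\ref{assum:IRLSA}(a) is, as you correctly note, an additional constraint on the constants rather than something the exponent conditions deliver.
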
  
 
 \begin{proof}
Similar to the proof of Lemma 5 in \cite{Yousefian}. Omitted because of the space requirements.  
 \end{proof}
Next, we show the a.s. convergence of the sequence $\{\bar{x}_k\}$.
  \begin{theorem}{\bf (a.s. convergence of $\{\bar{x}_k\}$): } \label{convergence_AS_mean_SENSES} Consider problem \eqref{Q}. Let $\gamma_{{k}} \text{ and } \eta_{{k}}$ be the sequences defined by Lemma \ref{sequence_parameters_&_gamma_eta}  	where $\gamma_0>0$, $\eta_0>0$, and $ ar< 1,$. Then $\{\bar{x}_k\}$ converges to the unique solution of \eqref{Q}, $x^*_g$ a.s.
 \end{theorem}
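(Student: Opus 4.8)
The plan is to lift the result of Proposition \ref{proposition_1} --- which controls only the block-weighted distance of $x_k$ to the \emph{moving} minimizer $x^*_{\eta_{k-1}}$ --- first to a.s. convergence of the raw iterates $\{x_k\}$ to the fixed target $x^*_g$, and then to transfer this to the averaged sequence $\{\bar{x}_k\}$ using the Toeplitz-type Lemma \ref{convergence_sum}. Since the sequences prescribed by Lemma \ref{sequence_parameters_&_gamma_eta} satisfy Assumption \ref{assum:IRLSA}, Proposition \ref{proposition_1} is applicable and gives $\mathcal{L}(x_k, x^*_{\eta_{k-1}}) \to 0$ a.s.

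First I would convert this into convergence of the iterates. By Corollary \ref{Bounds_L}, $\|x_k - x^*_{\eta_{k-1}}\|^2$ is sandwiched between positive constant multiples of $\mathcal{L}(x_k, x^*_{\eta_{k-1}})$, so $\|x_k - x^*_{\eta_{k-1}}\| \to 0$ a.s. Because $\{\eta_k\}$ decreases to zero, Lemma \ref{final_convergence_lemma}(b) yields the deterministic convergence $x^*_{\eta_{k-1}} \to x^*_g$. Combining the two through the triangle inequality $\|x_k - x^*_g\| \le \|x_k - x^*_{\eta_{k-1}}\| + \|x^*_{\eta_{k-1}} - x^*_g\|$ then gives $x_k \to x^*_g$ a.s.

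Next I would pass to the average. Unwinding the recursion \eqref{alg_avg} by induction shows $\bar{x}_k = \left(\sum_{t=0}^{k} \gamma_t^r x_t\right)/\left(\sum_{t=0}^{k} \gamma_t^r\right)$, which is exactly the weighted-average form appearing in Lemma \ref{convergence_sum} with $u_t = x_t$ and $\alpha_t = \gamma_t^r$. To invoke that lemma I must verify $\sum_{t=0}^{\infty} \gamma_t^r = \infty$: with $\gamma_t = \gamma_0(t+1)^{-a}$ we have $\gamma_t^r = \gamma_0^r (t+1)^{-ar}$, and the hypothesis $ar < 1$ makes this a divergent $p$-series. On the probability-one event where $x_t \to x^*_g$, the sequence $\{x_t\}$ is a convergent deterministic path with limit $x^*_g$ and the weights $\gamma_t^r$ are deterministic, so Lemma \ref{convergence_sum} applies pathwise and gives $\bar{x}_k \to x^*_g$; as this holds on a full-measure event, $\bar{x}_k \to x^*_g$ a.s., which is the claim.

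The main subtlety is not a single hard estimate but the bookkeeping between the stochastic and deterministic ingredients: Proposition \ref{proposition_1} only measures $x_k$ against the moving target $x^*_{\eta_{k-1}}$, so convergence to the genuine bilevel solution $x^*_g$ hinges on Lemma \ref{final_convergence_lemma}(b); and Lemma \ref{convergence_sum} is a deterministic Toeplitz statement, so it must be applied pathwise on the full-measure convergence event rather than directly to the random sequence. The only quantitative check --- $\sum \gamma_t^r = \infty$ under $ar<1$ --- is immediate.
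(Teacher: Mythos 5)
Your proposal is correct and follows essentially the same route as the paper: Proposition \ref{proposition_1} combined with Corollary \ref{Bounds_L}, Lemma \ref{final_convergence_lemma}(b), and the triangle inequality to obtain $x_k \to x^*_g$ a.s., then the Toeplitz-type Lemma \ref{convergence_sum} with weights $\gamma_t^r$ (divergent because $ar<1$) to transfer this to the averaged sequence $\{\bar{x}_k\}$. If anything, your write-up is slightly more complete than the paper's, which cites Proposition \ref{proposition_1} directly for $\left\Vert x_k - x^*_g\right\Vert \to 0$ and leaves the bridge from $\mathcal{L}\left(x_k, x^*_{\eta_{k-1}}\right)\to 0$ to convergence toward the fixed target $x^*_g$ implicit.
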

 \begin{proof}
  From $\lambda_{t,k} = {\gamma_t^r}/{\sum_{j=0}^{k} \gamma_j^r}$,
 	$
 	\left\Vert\left. \bar{x}_{k} - x^*_g \right\Vert\right. = \left\Vert\left. \sum_{t=0}^{k} \lambda_{t,k}x_t - \sum_{t=0}^{k} \lambda_{t,k}x^*_g \right\Vert\right. = \left\Vert\left. \sum_{t=0}^{k} \lambda_{t,k}\left(x_t - x^*_g\right) \right\Vert\right..
 	$
 	Using the triangle inequality,
 $
 	\left\Vert\left. \bar{x}_{k} - x^*_g \right\Vert\right. \leq \sum_{t=0}^{k}\lambda_{t,k} \left\Vert\left. x_t - x^*_g \right\Vert\right..	
 $
 	From definition of $\lambda_{t,k}$,
 	$
 	\left\Vert\left. \bar{x}_{k} - x^*_g \right\Vert\right. \leq \frac{\sum_{t=0}^{k}\gamma_t^r\left\Vert\left. x_t - x^*_g \right\Vert\right.}{\sum_{j=0}^{k} \gamma_j^r} .$
 	Comparing  with Lemma \ref{convergence_sum}, 
 $
 	\alpha_k \triangleq \gamma_k^r, \, u_k \triangleq \left\Vert\left. x_k - x^*_g \right\Vert\right., \,  v_{k+1} \triangleq \sum_{t=0}^{k}\gamma_t^r\left\Vert\left. x_t - x^*_g \right\Vert\right. .
$
 	Consider $ \sum_{k=0}^{\infty} \alpha_t $, i.e. $ \sum_{k=0}^{\infty} (1+k)^{-at}. $ As assumed in  Lemma \ref{convergence_AS_mean_SENSES}, we have $at<1$, so  $ \sum_{k=0}^{\infty} (1+k)^{-at} = \infty. $ From Proposition \ref{proposition_1} (b), $\left\Vert\left. x_k - x^*_g \right\Vert\right.\rightarrow 0$ a.s. Using Lemma \ref{convergence_sum}, we get the required result.
 \end{proof} 
  \section{Rate of convergence}
In this section, first  we derive the rate of convergence of \ref{algorithm_proposed} with respect to the inner level problem in \eqref{Q}.
%
  \begin{lemma}{\bf (Feasibility error bound for Algorithm \ref{algorithm_1})} \label{rate_bound}
 	Consider problem \eqref{Q} and $\{\bar{x}_k\}$, the sequence generated by Algorithm \ref{algorithm_1}. Let Assumption \ref{obj_func} hold, $r(<1)$ be an arbitrary scalar, and $\gamma_k$ be a non-increasing sequence. Let $\eta_{{k}}$ be a non-increasing sequence and $X$ to be bounded, i.e. $\|x\|\leq M$ for all $x \in X$ for some  $M>0$. Then for any $z \in X$, the following holds,
 	\begin{align*}
 	\EXP{f\left(\bar{x}_N\right)}-  f(z) \leq  \left(\sum_{i=0}^{N-1}\gamma_{{i}}^r\right)^{-1}\left(2 M_g \sum_{k=0}^{N-1}\gamma_{{k}}^r \eta_{{k}}+\right.\nonumber \\\left. 2\prob{{max}}M^2\left(\gamma_{{0}}^{r-1}+\gamma_{{N-1}}^{r-1}\right) + \left(\sum_{k=0}^{N-1} \gamma_{k}^{r+1}\right) \left(C_f^2 + C_g^2\eta_{{0}}^2\right)\right),\end{align*} where $M_g (>\hspace{-0.12cm}0)$ is a scalar such that $g(x) \hspace{-0.1cm}\leq \hspace{-0.12cm}M_g$ for all $x\in X.$
 \end{lemma}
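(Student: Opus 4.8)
The plan is to bound the weighted average of $f$ through convexity and then to telescope the resulting per-iteration descent inequality against the averaging weights $\gamma_k^r$. First I would record the averaging identity
\[
\bar x_N = \Big(\textstyle\sum_{t=0}^{N-1}\gamma_t^r\Big)^{-1}\sum_{t=0}^{N-1}\gamma_t^r\,x_t,
\]
exactly as in the proof of Theorem \ref{convergence_AS_mean_SENSES}, and invoke convexity of $f$ together with Jensen's inequality and the fact that the weights $\gamma_t^r$ are deterministic to obtain
\[
\EXP{f(\bar x_N)}-f(z)\ \le\ \Big(\textstyle\sum_{t=0}^{N-1}\gamma_t^r\Big)^{-1}\sum_{t=0}^{N-1}\gamma_t^r\,\EXP{f(x_t)-f(z)}.
\]
It then suffices to control $\sum_t \gamma_t^r\,\EXP{f(x_t)-f(z)}$.

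Next I would derive a one-step recursion for $\mathcal{L}(x_{k+1},z)$ with $z\in X$. Repeating the block expansion, the non-expansiveness of the projection on the active block, and the conditional expectation over the uniformly chosen $i_k$ carried out in Lemma \ref{lemma_5} up to \eqref{Eq_21} (now with $z$ in place of $x^*_{\eta_k}$), and using Remark \ref{remark_subgrad_bound} together with $\eta_k\le\eta_0$ for the noise term, gives
\[
\EXP{\mathcal{L}(x_{k+1},z)\mid\mathcal{F}_k}\le \mathcal{L}(x_k,z)-2\gamma_k(x_k-z)^T\big(\tilde{\nabla} f(x_k)+\eta_k\tilde{\nabla} g(x_k)\big)+2\gamma_k^2\big(C_f^2+\eta_0^2 C_g^2\big).
\]
I then apply the subgradient inequality $(x_k-z)^T\tilde{\nabla} f(x_k)\ge f(x_k)-f(z)$, and bound the remaining inner product by convexity and boundedness of $g$ on the compact set $X$, namely $-(x_k-z)^T\tilde{\nabla} g(x_k)\le g(z)-g(x_k)\le 2M_g$. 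Dividing by $2\gamma_k^{1-r}$ and taking total expectations via the tower property yields the key estimate
\[
\gamma_k^r\,\EXP{f(x_k)-f(z)}\le \tfrac12\gamma_k^{r-1}\big(\EXP{\mathcal{L}(x_k,z)}-\EXP{\mathcal{L}(x_{k+1},z)}\big)+2M_g\,\gamma_k^r\eta_k+\gamma_k^{r+1}\big(C_f^2+\eta_0^2 C_g^2\big).
\]

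Finally I would sum this over $k=0,\dots,N-1$. The terms $2M_g\gamma_k^r\eta_k$ and $\gamma_k^{r+1}(C_f^2+\eta_0^2C_g^2)$ sum directly into the first and third contributions of the claimed bound. The crux is the weighted telescoping sum $\tfrac12\sum_k \gamma_k^{r-1}\big(\EXP{\mathcal{L}(x_k,z)}-\EXP{\mathcal{L}(x_{k+1},z)}\big)$: because $r<1$ and $\gamma_k$ is non-increasing the weights $\gamma_k^{r-1}$ are non-decreasing and non-constant, so the sum does not collapse by itself. I would apply summation by parts to rewrite it as $\tfrac12\big(\gamma_0^{r-1}\EXP{\mathcal{L}(x_0,z)}-\gamma_{N-1}^{r-1}\EXP{\mathcal{L}(x_N,z)}+\sum_{k}(\gamma_k^{r-1}-\gamma_{k-1}^{r-1})\EXP{\mathcal{L}(x_k,z)}\big)$, discard the nonpositive boundary term, use Corollary \ref{Bounds_L} with $\|x\|\le M$ on $X$ to bound each $\EXP{\mathcal{L}(x_k,z)}$ uniformly by a fixed multiple of $M^2$, and exploit the monotonicity of the weights to collapse the increments, leaving a contribution controlled by $2\prob{{max}}M^2\big(\gamma_0^{r-1}+\gamma_{N-1}^{r-1}\big)$. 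Dividing through by $\sum_{i=0}^{N-1}\gamma_i^r$ and combining with the Jensen step delivers the stated inequality. The main obstacle is precisely this last step: handling the non-telescoping weighted differences of $\EXP{\mathcal{L}(x_k,z)}$ by summation by parts while keeping careful track of the boundary terms at $k=0$ and $k=N-1$; a secondary subtlety is that the $2M_g$ bound on the $g$-term quietly requires $g$ to be bounded below on $X$, which holds since $X$ is compact and $g$ is finite and convex.
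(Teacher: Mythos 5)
Your proposal is correct and follows essentially the same route as the paper: the same one-step recursion for $\mathcal{L}(x_{k+1},z)$ via block expansion, projection non-expansiveness, and conditional expectation; the same subgradient inequality for $f$ and $g$; the same multiplication by $\gamma_k^{r-1}$ followed by summation by parts (the paper phrases this as adding and subtracting $\gamma_{k-1}^{r-1}\EXP{\mathcal{L}(x_k,z)}$), with the increments $\gamma_k^{r-1}-\gamma_{k-1}^{r-1}$ controlled by the uniform bound $\EXP{\mathcal{L}(x_k,z)}\le 4\prob{max}M^2$ from Corollary \ref{Bounds_L} and the boundedness of $X$. Your remark that the $2M_g$ estimate on the $g$-difference tacitly needs $g$ bounded below on $X$ (the lemma only posits $g(x)\le M_g$) is a fair observation that applies equally to the paper's own term-12 bound, but it does not alter the argument.
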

 \begin{proof}
Consider equation \eqref{alg_avg} in step 6 of \ref{algorithm_proposed}. Note that using induction, it can be shown that $\bar{x}_k=\sum_{i=0}^{k}\lambda_{t,k} x_t,$ where $\lambda_{t,k} \triangleq \gamma_t^r/\sum_{j=0}^{k}\gamma_j^r. $
 	 
 	 Next, consider $\{ x_{k} \}$ be the sequence generated from Algorithm \ref{algorithm_1} and $z \in X$. Then from Definition \ref{function_L}, we have,
 	\begin{align*}
 	&\mathcal{L}\left(x_{k+1},z\right) = \\&\sum_{i = 1, i \neq i_k}^{d} \prob{i}^{-1} \left\Vert\left. x_k^{(i)} - z^{(i)} \right\Vert\right.^2 + \prob{{i_k}}^{-1}\underbrace{\left\Vert\left. x_k^{(i_k)} - z^{(i_k)} \right\Vert\right.^2}_{\text{term-1}}.
 	\end{align*}
 	Consider term-1. From  \ref{algorithm_proposed}, substituting  $x_{k+1}^{(i_k)}$ and using the non-expansiveness property of the projection operator,
 	\begin{align*}
 	&\left\Vert\left.x_{k+1}^{(i_k)} - z^{(i_k)}\right\Vert\right.^2 \\
 	&\leq \left\Vert\left.x_k^{(i_k)} - z^{(i_k)} \right\Vert\right.^2 + \gamma_k^2\left\Vert\left.\tilde{\nabla}_{i_k}f(x_k) + \eta_k \tilde{\nabla}_{i_k} g(x_k)\right\Vert\right.^2 \\&- 2 \gamma_{{k}}\left(x_k^{(i_k)} - z^{(i_k)} \right)^{T}\left(\tilde{\nabla}_{i_k}f(x_k) + \eta_k \tilde{\nabla}_{i_k} g(x_k)\right).
 	\end{align*}
 	Substituting the bound on term-1, we obtain,
	\begin{align*} 
 	&\mathcal{L}\left(x_{k+1},z\right) =  \\&\mathcal{L}\left(x_{k},z\right) +\prob{{i_k}}^{-1} \gamma_k^2\underbrace{\left\Vert\left.\tilde{\nabla}_{i_k}f(x_k) + \eta_k \tilde{\nabla}_{i_k} g(x_k)\right\Vert\right.^2}_{\text{term-2}}  \nonumber\\& -\prob{{i_k}}^{-1} 2  \gamma_{{k}}\left(x_k^{(i_k)} - z^{(i_k)} \right)^{T}\left(\tilde{\nabla}_{i_k}f(x_k) + \eta_k \tilde{\nabla}_{i_k} g(x_k)\right),
 	\end{align*}
 	here we used Definition \ref{function_L}. From Remark \ref{remark_subgrad_bound}, bounding term-2, 
 	\begin{align*}
 	\left\Vert\left.\tilde{\nabla}_{i_k}f(x_k) + \eta_k \tilde{\nabla}_{i_k} g(x_k)\right\Vert\right.^2 \leq  2 C_{f, i_k}^2 + 2 \eta_{{k}}^2 C_{g, i_k}^2.
 	\end{align*}
 	Substituting the bound of term-2, we get, 
 	\begin{align*} 
 	&\mathcal{L}\left(x_{k+1},z\right) \leq \mathcal{L}\left(x_{k},z\right) +2\prob{{i_k}}^{-1} \gamma_k^2   C_{f, i_k}^2+ 2 \prob{{i_k}}^{-1}  \eta_k^2 \gamma_k^2  C_{g, i_k}^2  \nonumber 	\\&-2\prob{{i_k}}^{-1}   \gamma_{{k}}\left(x_k^{(i_k)} - z^{(i_k)} \right)^{T}\left(\tilde{\nabla}_{i_k}f(x_k) + \eta_k \tilde{\nabla}_{i_k} g(x_k)\right).
 	\end{align*}
 	By taking conditional expectation on the both sides of equation above, and since  $\mathcal{L}\left(x_{k},z\right)$ is $\mathcal{F}_k$ measurable, 
 	\begin{align} \label{55}
 	&\EXP{\mathcal{L}\left(x_{k+1},z\right)|\mathcal{F}_k} \nonumber \leq \mathcal{L}\left(x_{k},z\right) +2   \gamma_k^2 \underbrace{\EXP{ \prob{{i_k}}^{-1} C_{f, {i_k}}^2|\mathcal{F}_k}}_{\text{term-3}}-2\gamma_k\\ &\underbrace{\EXP{\prob{{i_k}}^{-1}   \left(x_k^{(i_k)} - z^{(i_k)} \right)^{T}\left(\tilde{\nabla}_{i_k}f(x_k) + \eta_k \tilde{\nabla}_{i_k} g(x_k)\right)|\mathcal{F}_k}}_{\text{term-5}}\nonumber\\&+ 2 \eta_k^2 \gamma_k^2 \underbrace{ \EXP{ \prob{{i_k}}^{-1} C_{g, i_k}^2|\mathcal{F}_k} }_{\text{term-4}} .
 	\end{align} 
 	Using definition of expectation, $
 	 \text{term-3} = C_{f}^{2}$, term-4 $=  C_{g}^{2},
 	$
  \\ term-5    
 	$
 = \left( x_{k} - z\right)^T\left(\tilde{\nabla} f\left( x_{k}\right) + \eta_{k} \tilde{\nabla} g\left( x_{k}\right)\right).
 	$ From \eqref{55}, 	 
 	\begin{align} \label{56}
 	\EXP{\mathcal{L}\left(x_{k+1},z\right)|\mathcal{F}_k} \leq \mathcal{L}\left(x_{k},z\right) +2   \gamma_k^2 C_f^2+ 2 \eta_k^2 \gamma_k^2 C_g^2  \nonumber\\+2\gamma_k\underbrace{\left(z- x_{k} \right)^T\left(\tilde{\nabla} f\left( x_{k}\right) + \eta_{k} \tilde{\nabla} g\left( x_{k}\right)\right)}_{\text{term-6}}.
 	\end{align}
 	Using the definition of subgradient at point $x_k$, 
 	\begin{align*}\text{term-6} =
 &\left(z - x_{k}\right)^T\tilde{\nabla} f\left( x_{k}\right) + \eta_{{k}}\left(z - x_{k}\right)^T\tilde{\nabla} g\left( x_{k}\right) \\& \leq f(z) - f\left(x_k\right) + \eta_{{k}}g(z) - \eta_{{k}}g\left( x_k \right).
 \end{align*}
 	Bounding term-6, using conditional and total expectation,
 	\begin{align} \label{57}
 	&\EXP{\mathcal{L}\left(x_{k+1},z\right)} \leq \EXP{\mathcal{L}\left(x_{k},z\right)} +2   \gamma_k^2 C_f^2+ 2 \eta_k^2 \gamma_k^2 C_g^2 \nonumber\\& +2\gamma_k\left(f(z)+\eta_{{k}}g(z)-\EXP{f\left(x_k\right)+\eta_{{k}}g\left(x_k\right)}\right).
 	\end{align}
 	Multiplying the both sides of equation \eqref{57} by $\gamma_k^{r-1}$, and adding, subtracting $\gamma_{k-1}^{r-1}\EXP{\mathcal{L}\left(x_k, z\right)}$ on the left-hand side,
 	\begin{align}\label{58}
 	&\gamma_k^{r-1} \EXP{\mathcal{L}\left(x_{k+1}, z\right)} - \left(\gamma_k^{r-1}-\gamma_{k-1}^{r-1}\right)\underbrace{\EXP{\mathcal{L}\left(x_k,z\right)}}_{\text{term-7}}\nonumber\\&-\gamma_{k-1}^{r-1}\EXP{\mathcal{L}\left(x_k,z\right)}\nonumber\leq 2 \gamma_{k}^{r+1}C_f^2+ 2 \gamma_{k}^{r+1}\eta_{{k}}^2C_g^2 \nonumber  \\&+2 \gamma_k^r\left(f(z)+\eta_{{k}}g(z)-\EXP{f\left(x_k\right)+\eta_{{k}}g\left(x_k\right)}\right).
 	\end{align}
 	Since $r<1$ and $\gamma_{{k}}$ is a non-increasing, $\gamma_{{k-1}}^{r-1}-\gamma_{k}^{r-1}$ is a non-negative sequence. From Lemma \ref{Bounds_L},  $\mathcal{L}\left(x_k,z\right)\leq \prob{{max}}\|x_k-z\|^2\leq 2 \prob{{max}}\left(\|x_k\|^2+\|z\|^2\right)$. From the boundedness of set $X$,  $\EXP{\mathcal{L}\left(x_k,k\right)} \leq 4 \prob{{max}}M^2$. Substituting  bound on term-7 in \eqref{58} and summing up over $k =1, \dots, N-1$, 
 	\begin{align}\label{59}
 \nonumber	- \gamma_{0}^{r-1}&\EXP{\mathcal{L}\left(x_1,z\right)} -  4\gamma_{N-1}^{r-1}\prob{{max}}M^2  \leq 2 C_f^2 \sum_{k=1}^{N-1} \gamma_{k}^{r+1}\\&+ 2 C_g^2 \sum_{k=1}^{N-1}\gamma_{k}^{r+1}\eta_{{k}}^2 +2 \sum_{k=1}^{N-1}\gamma_k^r\left(f(z)+\eta_{{k}}g(z)\right)\nonumber\\&-2 \sum_{k=1}^{N-1}\gamma_k^r\EXP{f\left(x_k\right)+\eta_{{k}}g\left(x_k\right)}.
 	\end{align}	
 	putting  $k=0$ in  \eqref{57}, 
 	$ 
 	\EXP{\mathcal{L}\left(x_{1},z\right)} \leq \underbrace{\EXP{\mathcal{L}\left(x_{0},z\right)}}_{\text{term-8}} +2   \gamma_0^2 C_f^2+ 2 \eta_0^2 \gamma_0^2 C_g^2  +2\gamma_0\left(f(z)+\eta_{{0}}g(z)-\EXP{f\left(x_0\right)+\eta_{{0}}g\left(x_0\right)}\right).
 	$\\
 	Now, term-8 $
 	\leq 4 \prob{{max}}M^2 +2   \gamma_0^2 C_f^2+ 2 \eta_0^2 \gamma_0^2 C_g^2  +2\gamma_0\left(f(z)+\eta_{{0}}g(z)-\EXP{f\left(x_0\right)+\eta_{{0}}g\left(x_0\right)}\right).
 	$\\
 	Multiplying the both sides of equation with $\gamma_{{0}}^{r-1}$, we get,
 	\begin{align} \label{60}
 	&\gamma_{{0}}^{r-1}\EXP{\mathcal{L}\left(x_{1},z\right)} - 4\gamma_{{0}}^{r-1}  \prob{{max}}M^2 \leq 2  \gamma_0^{r+1} C_f^2 +2 \eta_0^2 \gamma_0^{r+1} \nonumber\\&C_g^2  +2\gamma_0^r\left(f(z)+\eta_{{0}}g(z)-\EXP{f\left(x_0\right)+\eta_{{0}}g\left(x_0\right)}\right).
 	\end{align}
 	Adding  \eqref{59} and \eqref{60} together, and
 	combining the terms,
 	 	\begin{align*}
 	&-4 \prob{{max}}M^2\left(\gamma_{{0}}^{r-1}+\gamma_{{N-1}}^{r-1}\right) \leq 2 C_f^2\left(\sum_{k=0}^{N-1} \gamma_{k}^{r+1}\right)\\&+2C_g^2\left(\sum_{k=0}^{N-1}\gamma_{k}^{r+1}\eta_{{k}}^2\right)+ 2\left(\sum_{k=0}^{N-1}\gamma_k^r\left(f(z)+\eta_{{k}}g(z)\right)\right) \\&-2\left(\sum_{k=0}^{N-1}\gamma_k^r\EXP{f\left(x_k\right)+\eta_{{k}}g\left(x_k\right)}\right).
\end{align*}
 	Dividing the both sides by $\sum_{i=0}^{N-1}\gamma_{{i}}^r$, and denoting $\frac{\gamma_k^r}{\sum_{i=0}^{N-1}\gamma_i^r} = \lambda_{k,N-1}$, we get,\\
 	$\
 	\underbrace{\sum_{k=0}^{N-1}\lambda_{k,N-1}\EXP{f\left(x_k\right)+\eta_{{k}}g\left(x_k\right)}}_{\text{term-9}} - \sum_{k=0}^{N-1}\lambda_{k,N-1} \left(f(z)\right.\\\left.+\eta_{{k}}g(z)\right) \leq \left(\sum_{i=0}^{N-1}\gamma_{{i}}^r\right)^{-1}\left(2\prob{{max}}M^2\left(\gamma_{{0}}^{r-1}+\gamma_{{N-1}}^{r-1}\right)\right.\\\left. + C_f^2\left(\sum_{k=0}^{N-1} \gamma_{k}^{r+1}\right) + C_g^2\left(\sum_{k=0}^{N-1}\gamma_{k}^{r+1}\eta_{{k}}^2\right)\right).
 	$\\
 	By updating term-9 and  rearranging the original terms,
 	\begin{align*}
 	\underbrace{\EXP{	\sum_{k=0}^{N-1}\lambda_{k,N-1}f\left(x_k\right)}}_{\text{term-10}} - \underbrace{\sum_{k=0}^{N-1}\lambda_{k,N-1} f(z)}_{\text{term-11}} \leq\nonumber\\ \sum_{k=0}^{N-1}\lambda_{k,N-1} \eta_{{k}}g(z) -  \EXP{	\sum_{k=0}^{N-1}\lambda_{k,N-1}\eta_{{k}}g\left(x_k\right)}\nonumber\\+ \nonumber\left(\sum_{i=0}^{N-1}\gamma_{{i}}^r\right)^{-1}\left(2\prob{{max}}M^2\left(\gamma_{{0}}^{r-1}+\gamma_{{N-1}}^{r-1}\right)\right.\\ \left. + C_f^2\left(\sum_{k=0}^{N-1} \gamma_{k}^{r+1}\right) + C_g^2\left(\sum_{k=0}^{N-1}\gamma_{k}^{r+1}\eta_{{k}}^2\right)\right).
 	\end{align*}
 	Using the convexity of $f$ and  the definition of $\lambda_{k,N-1}$, we have term-10 
 	$
 	 \leq \sum_{k=0}^{N-1}\lambda_{k,N-1}f\left(x_k\right) 
 	$, and term-11 $ =f(z).
 	$
 	\begin{align*}
 	&\EXP{f\left(\bar{x}_N\right)} -  f(z)\nonumber \leq \\ &\underbrace {\sum_{k=0}^{N-1}\lambda_{k,N-1} \eta_{{k}}g(z) -  \EXP{	\sum_{k=0}^{N-1}\lambda_{k,N-1}\eta_{{k}}g\left(x_k\right)}}_{\text{term-12}}\nonumber\\&+ \left(\sum_{i=0}^{N-1}\gamma_{{i}}^r\right)^{-1}\left(2\prob{{max}}M^2\left(\gamma_{{0}}^{r-1}+\gamma_{{N-1}}^{r-1}\right) \right.\nonumber\\&\left.+ C_f^2\left(\sum_{k=0}^{N-1} \gamma_{k}^{r+1}\right) + C_g^2\left(\sum_{k=0}^{N-1}\gamma_{k}^{r+1}\eta_{{k}}^2\right)\right).\end{align*}
	Using  definition of {$M_g$}, we obtain, \\ term-12= 
	$
 	\EXP{	\sum_{k=0}^{N-1}\lambda_{k,N-1} \eta_{{k}}g(z) - 	\sum_{k=0}^{N-1}\lambda_{k,N-1}\eta_{{k}}g\left(x_k\right)} \\ \leq \EXP{	\sum_{k=0}^{N-1}\lambda_{k,N-1} \eta_{{k}} |g(z) - g\left(x_k\right)|} \\ \leq 2 M_g \sum_{k=0}^{N-1}\lambda_{k,N-1}\eta_{{k}}.
 	$\\
 	Bounding term-12 and using the definition of $\lambda_{k,N-1}$, 
 	\begin{align*}
 	\EXP{f\left(\bar{x}_N\right)}-  f(z) \leq  \left(\sum_{i=0}^{N-1}\gamma_{{i}}^r\right)^{-1}\left(2 M_g \sum_{k=0}^{N-1}\gamma_{{k}}^r \eta_{{k}}+ C_f^2\right.\\\left.\sum_{k=0}^{N-1} \gamma_{k}^{r+1} \hspace{-0.25cm}+ 2\prob{{max}}M^2\left(\gamma_{{0}}^{r-1}+\gamma_{{N-1}}^{r-1}\right) + C_g^2\sum_{k=0}^{N-1}\gamma_{k}^{r+1}\eta_{{k}}^2\right)
 	\end{align*}
 	Here, since $\eta_k$ is a non-increasing sequence, bounding it by $\eta_{{0}}$, we get the required result.
 \end{proof}
  Next, we state Lemma \ref{rate_support} (see Lemma 9, pg. 418 of \cite{Yousefian}) and use it in Theorem \ref{rate_result} to derive the rate of convergence. 
   \begin{lemma} \label{rate_support}
 	For a scalar $\alpha \neq -1 $ and integers l, N, where $0\leq l \leq N-1$, we have 
  \begin{align*}
  \frac{N^{\alpha+1} - \left(l+1\right)^{\alpha+1}}{\alpha +1 } \leq \sum_{k=l}^{N-1}\left(k+1\right)^{\alpha}\leq \left(l+1\right)^{\alpha}\\ + \frac{\left( N+1 \right)^{\alpha+1}-\left(l+1\right)^{\alpha+1}}{\alpha+1}.
  \end{align*}
\end{lemma}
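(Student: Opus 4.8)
The plan is to reduce the two-sided inequality to a standard integral comparison. First I would re-index the sum by setting $j=k+1$, so that
\[
\sum_{k=l}^{N-1}(k+1)^{\alpha}=\sum_{j=l+1}^{N}j^{\alpha},
\]
and observe that, since $\alpha\neq-1$, the function $t^{\alpha}$ has antiderivative $t^{\alpha+1}/(\alpha+1)$, whence $\int_{l+1}^{N}t^{\alpha}\,dt=\frac{N^{\alpha+1}-(l+1)^{\alpha+1}}{\alpha+1}$ and $(l+1)^{\alpha}+\int_{l+1}^{N+1}t^{\alpha}\,dt=(l+1)^{\alpha}+\frac{(N+1)^{\alpha+1}-(l+1)^{\alpha+1}}{\alpha+1}$. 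Thus the assertion is exactly $\int_{l+1}^{N}t^{\alpha}\,dt\le\sum_{j=l+1}^{N}j^{\alpha}\le(l+1)^{\alpha}+\int_{l+1}^{N+1}t^{\alpha}\,dt$, and it suffices to prove this sandwich.

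The engine of the argument is the per-term comparison that comes from the monotonicity of $t\mapsto t^{\alpha}$ on $[l+1,\infty)\subseteq[1,\infty)$. Since $l\ge0$, the integrand is strictly positive on this domain; it is nondecreasing when $\alpha>0$ and nonincreasing when $\alpha<0$ (the case $\alpha=0$ being immediate, as the sum is just $N-l$). For a nondecreasing integrand one has $\int_{j-1}^{j}t^{\alpha}\,dt\le j^{\alpha}\le\int_{j}^{j+1}t^{\alpha}\,dt$, whereas for a nonincreasing integrand both inequalities reverse to $\int_{j}^{j+1}t^{\alpha}\,dt\le j^{\alpha}\le\int_{j-1}^{j}t^{\alpha}\,dt$. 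Summing the appropriate inequality over $j=l+1,\dots,N$ telescopes the integral sides into a single integral over a shifted interval.

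I would then assemble the two bounds. For the lower bound, in the decreasing case summing $j^{\alpha}\ge\int_{j}^{j+1}t^{\alpha}\,dt$ gives $\sum\ge\int_{l+1}^{N+1}t^{\alpha}\,dt\ge\int_{l+1}^{N}t^{\alpha}\,dt$ (the last step by positivity of the integrand), while in the increasing case summing $j^{\alpha}\ge\int_{j-1}^{j}t^{\alpha}\,dt$ gives $\sum\ge\int_{l}^{N}t^{\alpha}\,dt\ge\int_{l+1}^{N}t^{\alpha}\,dt$; either way the lower bound holds. For the upper bound I would peel off the leading term: in the decreasing case $\sum_{j=l+2}^{N}j^{\alpha}\le\int_{l+1}^{N}t^{\alpha}\,dt\le\int_{l+1}^{N+1}t^{\alpha}\,dt$, so adding back $(l+1)^{\alpha}$ produces exactly $(l+1)^{\alpha}+\int_{l+1}^{N+1}t^{\alpha}\,dt$; in the increasing case $\sum_{j=l+1}^{N}j^{\alpha}\le\int_{l+1}^{N+1}t^{\alpha}\,dt$ already, and the nonnegative summand $(l+1)^{\alpha}$ only strengthens the estimate.

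The main obstacle is bookkeeping rather than depth: the monotonicity of $t^{\alpha}$ flips with the sign of $\alpha$, forcing a case split, and the hypothesis $\alpha\neq-1$ is needed only so that the antiderivative formula is well defined, not for the monotonicity, which is valid for every $\alpha<0$ including $\alpha<-1$. The one genuinely asymmetric point is why the upper bound carries the extra summand $(l+1)^{\alpha}$ while the lower bound does not: this boundary term is indispensable precisely in the nonincreasing case, where the largest term $(l+1)^{\alpha}$ cannot be absorbed into the integral and must be isolated before the remaining tail is integral-compared.
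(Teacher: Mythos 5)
Your proof is correct and complete: the integral-comparison argument with the case split on the monotonicity of $t^{\alpha}$, and the isolation of the boundary term $(l+1)^{\alpha}$ in the nonincreasing case, establishes both inequalities for every $\alpha\neq-1$. Note that the paper does not prove this lemma at all --- it simply cites Lemma 9, p.~418 of the reference \cite{Yousefian} --- so your argument supplies the standard proof that the paper leaves to that external source; it matches what one would expect that source to contain, and I see no gaps.
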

 In  Theorem \ref{rate_result}, we show the rate of convergence for \ref{algorithm_proposed}. 
 \begin{theorem}\label{rate_result}
 	Consider problem \eqref{Q} and the sequence generated from Algorithm \ref{algorithm_1} $\{\bar{x}_N\}$. Let Assumptions \ref{obj_func}, and \ref{random sample} hold. Let the sequence $\{ \gamma_{{k}} \} \text{ and } \{\eta_{{k}}\}$ are given by the following, 
 	$\gamma_{{k}} = {\gamma_{{0}}}/{\left(k+1\right)^{0.5+0.1 \delta}} $ and $ \eta_{{k}} = {\eta_{{0}}}/{\left(k+1\right)^{0.5-\delta}},$
 	such that $ \gamma_{{0}} >0, \ \eta_{{0}} >0, \ \gamma_{{0}}\eta_{{0}}<\frac{1}{\mu\prob{{}}},\ 0<\delta<0.5, \text{ and } r<1$. Then the following hold,\\
 	(i) Sequence $\{ \bar{x}_N \}$ converges to $x^*_{g}$ almost surely.
 	\\(ii) $\EXP{f\left( \bar{x}_N \right)}$ converges to the optimal solution of inner level of \eqref{Q}, $f^*$ with the rate of $\us{\cal O}\left({1}/{N^{0.5-\delta}}\right)$.
  \end{theorem}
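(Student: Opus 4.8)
The plan is to treat the two claims separately. For part~(i), I would set $a = 0.5 + 0.1\delta$ and $b = 0.5 - \delta$, so that the prescribed $\gamma_k = \gamma_0(k+1)^{-a}$ and $\eta_k = \eta_0(k+1)^{-b}$ match the template of Lemma~\ref{sequence_parameters_&_gamma_eta}, and verify its five conditions. Each reduces to an elementary inequality in $\delta$: $a>0$ and $b>0$ hold because $0<\delta<0.5$; $a+b = 1 - 0.9\delta < 1$; $b<a$ since $0.5-\delta < 0.5+0.1\delta$; and $a>0.5$ is immediate. Lemma~\ref{sequence_parameters_&_gamma_eta} then guarantees that $\{\gamma_k\}$ and $\{\eta_k\}$ satisfy Assumption~\ref{assum:IRLSA}. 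Because $a<0.55$ and $r<1$ yield $ar<1$, the hypotheses of Theorem~\ref{convergence_AS_mean_SENSES} hold, and I would invoke it directly to conclude $\bar{x}_N \to x^*_g$ almost surely.

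For part~(ii), I would apply Lemma~\ref{rate_bound} with $z$ chosen as an inner-level optimal solution, so that $f(z)=f^*$, giving
\begin{align*}
\EXP{f(\bar{x}_N)} - f^* \leq \Big(\textstyle\sum_{i=0}^{N-1}\gamma_i^r\Big)^{-1}\Big(& 2M_g\textstyle\sum_{k=0}^{N-1}\gamma_k^r\eta_k \\
&+ 2\prob{{max}}M^2\big(\gamma_0^{r-1}+\gamma_{N-1}^{r-1}\big) \\
&+ \big(\textstyle\sum_{k=0}^{N-1}\gamma_k^{r+1}\big)\big(C_f^2+C_g^2\eta_0^2\big)\Big).
\end{align*}
The heart of the argument is to estimate each summation with Lemma~\ref{rate_support}. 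The denominator $\sum_i \gamma_i^r = \gamma_0^r\sum_i(i+1)^{-ra}$ grows like $N^{1-ra}$ since $ra<1$. The leading numerator term satisfies $\gamma_k^r\eta_k = \gamma_0^r\eta_0(k+1)^{-(ra+b)}$ with $ra+b < a+b < 1$, so $\sum_k\gamma_k^r\eta_k = \Theta(N^{1-ra-b})$; dividing by the denominator produces exactly $\mathcal{O}(N^{-b}) = \mathcal{O}(N^{-(0.5-\delta)})$, the claimed rate.

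It then remains to verify that the remaining two numerator terms decay at least as fast. The boundary term $\gamma_{N-1}^{r-1} = \gamma_0^{r-1}N^{(1-r)a}$, divided by $N^{1-ra}$, contributes $\mathcal{O}(N^{a-1}) = \mathcal{O}(N^{-(0.5-0.1\delta)})$, which is $o(N^{-(0.5-\delta)})$ because $0.1\delta<\delta$. The term $\sum_k\gamma_k^{r+1}$ needs a short case split on the sign of $(r+1)a-1$: if $(r+1)a<1$ the sum is $\Theta(N^{1-(r+1)a})$ and contributes $\mathcal{O}(N^{-a})$, whereas if $(r+1)a\geq 1$ the sum is $O(\log N)$ or $O(1)$ and contributes $\mathcal{O}(N^{ra-1})$, and $ra < a < 0.5+\delta$ forces $ra-1 < -(0.5-\delta)$. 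In every case the contribution is subsumed by the $M_g$ term, so the overall rate is $\mathcal{O}(N^{-(0.5-\delta)})$. The main obstacle I anticipate is exactly this bookkeeping --- tracking which partial sums diverge versus converge as $r$ ranges below $1$ and confirming uniformly that the secondary terms stay dominated by the leading bound; together with part~(i) this establishes both assertions.
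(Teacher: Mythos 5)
Your proposal is correct and follows essentially the same route as the paper: part (i) reduces to checking the exponent conditions of Lemma \ref{sequence_parameters_&_gamma_eta} and invoking Theorem \ref{convergence_AS_mean_SENSES}, and part (ii) instantiates Lemma \ref{rate_bound} at $z=x^*_g$ and bounds each partial sum via Lemma \ref{rate_support}, with the $M_g$ term supplying the dominant rate $\mathcal{O}(N^{-b})=\mathcal{O}(N^{-(0.5-\delta)})$. The only cosmetic difference is that you handle $\sum_k\gamma_k^{r+1}$ by a case split on $(r+1)a$ versus $1$, whereas the paper simply bounds $(k+1)^{-(ar+a)}\leq(k+1)^{-(ar+b)}$ (since $a>b$) and absorbs that sum into the same $\mathcal{O}(N^{-(1-ar)})+\mathcal{O}(N^{-b})$ estimate as the leading term.
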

 \begin{proof}	{ (i) Consider the sequences given for $\gamma_k \text{ and } \eta_{{k}}$. By denoting $a = 0.5+0.1 \delta \text{ and } b = 0.5-\delta$, we have, 
 		$
 		\gamma_{{k}} = {\gamma_{{0}}}/{\left(k+1\right)^{a}} $ and $ \eta_{{k}} = {\eta_{{0}}}/{\left(k+1\right)^{b}}.
 		$
 		Also we know that $0<\delta<0.5 \text{ and } r<1$. Therefore, we have:  $ a,b >0, \ b< a, \ 0.5<a<0.55, \ 0<b<0.5,\ a+b <1 \text{ and } ar<1$. So,  $\gamma_{{k}} \text{ and } \eta_{{k}}$ satisfy all the conditions of Lemma \ref{convergence_AS_mean_SENSES}. } \\
 	(ii) Substituting of $\gamma_{{k}}, \eta_{{k}}$, and   $x^*_g$ at the place of $z$, in Lemma \ref{rate_bound}, we obtain, 
 	\begin{align*}
 	&\EXP{f\left(\bar{x}_N\right)}-  f^* \nonumber \leq  \left(\gamma_{{0}}^r \sum_{i=0}^{N-1} \frac{1}{(k+1)^{ar}}\right)^{-1}\\&\left(2 \prob{{max}}M^2\gamma_{{0}}^{r-1}\left(N^{a(1-r)}+1\right)+\gamma_{{0}}^r\left(2M_g\eta_{{0}}\sum_{k=0}^{N-1}\right.\right.\\&\left.\frac{1}{(k+1)^{ar+b}}+\left.\left(C_f^2+C_g^2\eta_{{0}}^2\right)\gamma_{{0}}\underbrace{\sum_{k=0}^{N-1}\frac{1}{(k+1)^{ar+a}}}_{\text{term-1}}\right)\right).
 	\end{align*}
 	 modifying term-1 in equation above, and expanding terms,
 		 	 	  \begin{figure}[b]
 		 	 	  	\hspace{0.2cm} 
 		 	 	  	\begin{subfigure}[b]{0.5\linewidth}\vspace{-0.6cm}
 		 	 	  		\includegraphics[width=\linewidth]{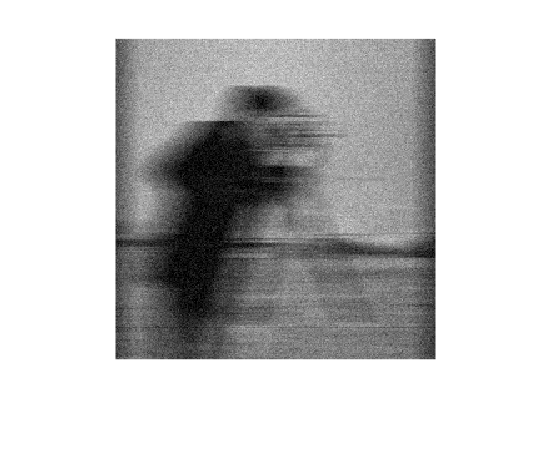}
 		 	 	  		\vspace*{-1.2cm}
 		 	 	  		\caption{Blurred image}
 		 	 	  	\end{subfigure}\hspace{0.34cm}
 		 	 	  	\begin{subfigure}{0.285\linewidth}
 		 	 	  		\vspace{-2.8cm}
 		 	 	  		\includegraphics[width=\linewidth]{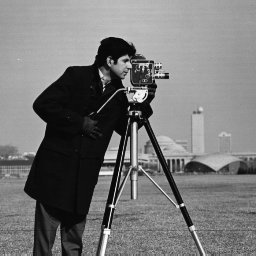}
 		 	 	  		\vspace*{-.45cm}
 		 	 	  		\caption{Original image}
 		 	 	  	\end{subfigure}
 		 	 	  	\caption{\footnotesize Blurred and original image of cameraman}
 		 	 	  	\label{blurred_original}
 		 	 	  	\vspace*{-0.5cm}
 		 	 	  \end{figure}
 		\begin{figure*}[t]
 		\centering
 		\begin{subfigure}[b]{0.23\linewidth}
 			\includegraphics[width=\linewidth]{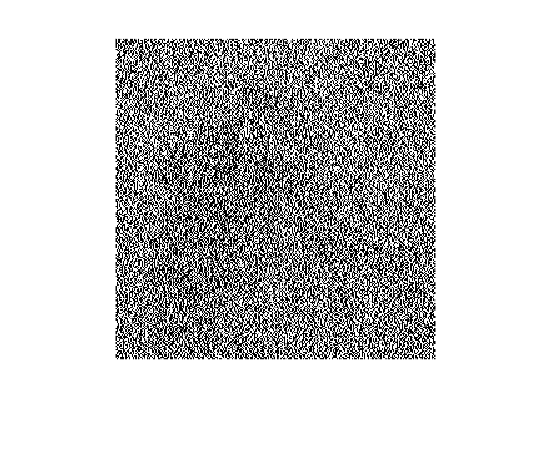}
 			\vspace*{-1.2cm}
 			\caption{$\eta$=0}
 		\end{subfigure} \hspace*{-1.cm}
 		\begin{subfigure}[b]{0.23\linewidth}
 			\includegraphics[width=\linewidth]{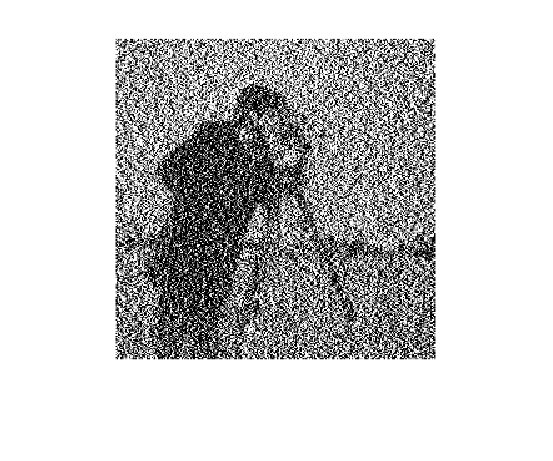}
 			\vspace*{-1.2cm}
 			\caption{$\eta$=0.001}
 		\end{subfigure}\hspace*{-1cm}
 		\begin{subfigure}[b]{0.23\linewidth}
 			\includegraphics[width=\linewidth]{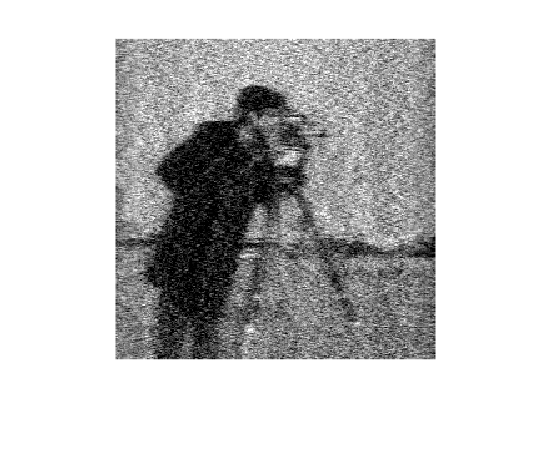}
 			\vspace*{-1.2cm}
 			\caption{$\eta$=0.01}
 		\end{subfigure}\hspace*{-1.cm}
 		\begin{subfigure}[b]{0.23\linewidth}
 			\includegraphics[width=\linewidth]{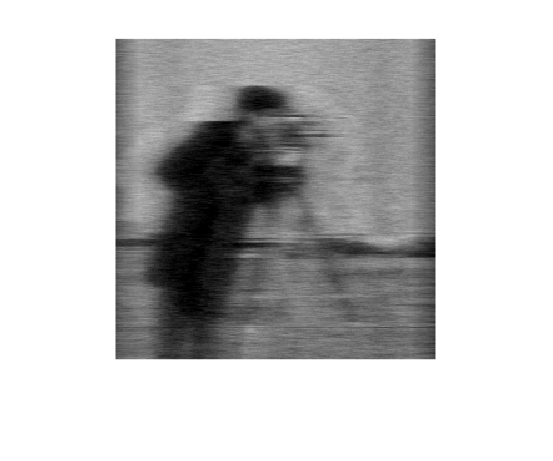}
 			\vspace*{-1.2cm}
 			\caption{$\eta$=0.1}
 		\end{subfigure}\hspace*{-1.cm}
 		\begin{subfigure}[b]{0.23\linewidth}
 			\includegraphics[width=\linewidth]{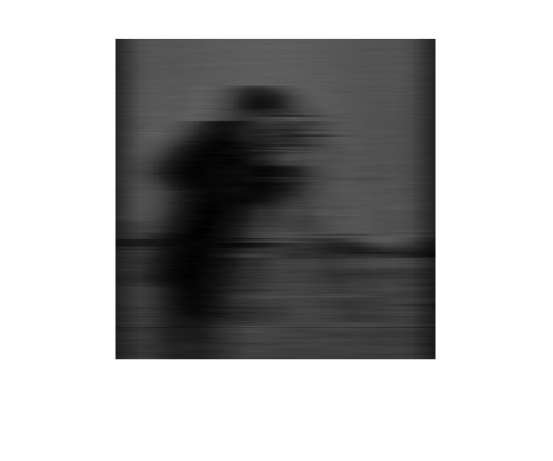}
 			\vspace*{-1.2cm}
 			\caption{$\eta$=1}
 		\end{subfigure}
 		\hspace*{\fill}\\ [1ex]
 		\begin{subfigure}[b]{0.23\linewidth}
 			\hspace{-0.25cm}\vspace{0.2cm}
 			\includegraphics[width=\linewidth]{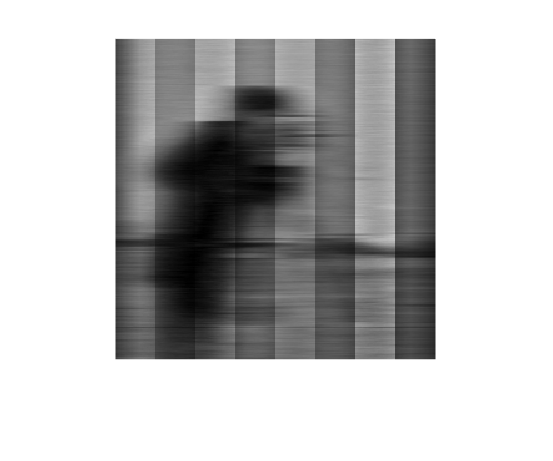}
 			\vspace*{-1.cm}
 			\caption{$k=10^2$}
 		\end{subfigure}\hspace*{-1cm}
 		\begin{subfigure}[b]{0.23\linewidth}
 			\includegraphics[width=\linewidth]{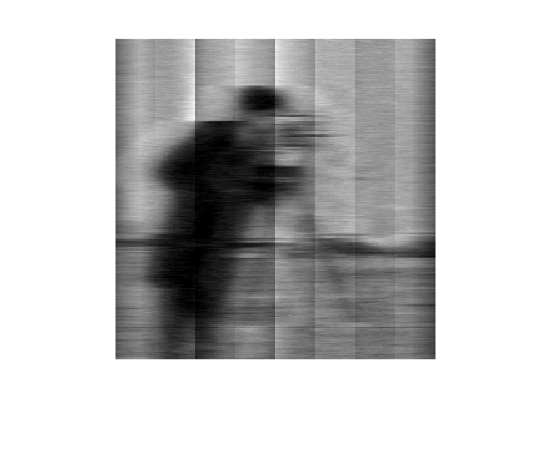}
 			\vspace*{-1.2cm}
 			\caption{$k=10^3$}
 		\end{subfigure}\hspace*{-1cm}
 		\begin{subfigure}[b]{0.23\linewidth}
 			\includegraphics[width=\linewidth]{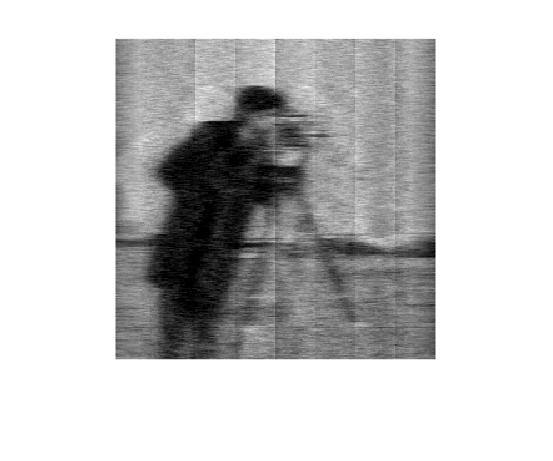}
 			\vspace*{-1.2cm}
 			\caption{$k=10^4$}
 		\end{subfigure}\hspace*{-1cm}
 		\begin{subfigure}[b]{0.23\linewidth}
 			\includegraphics[width=\linewidth]{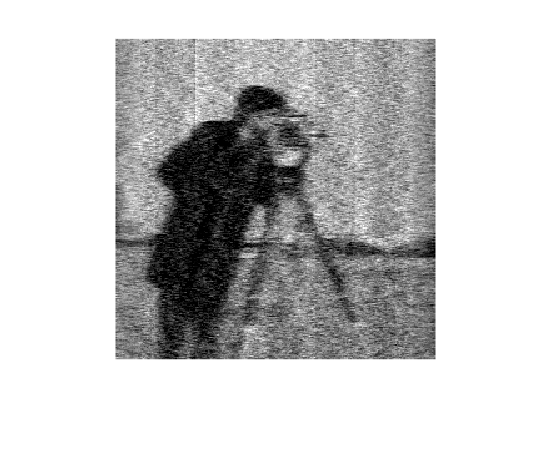}
 			\vspace*{-1.2cm}
 			\caption{$k=10^5$}
 		\end{subfigure}\hspace*{-1cm}
 		\begin{subfigure}[b]{0.23\linewidth}
 			\includegraphics[width=\linewidth]{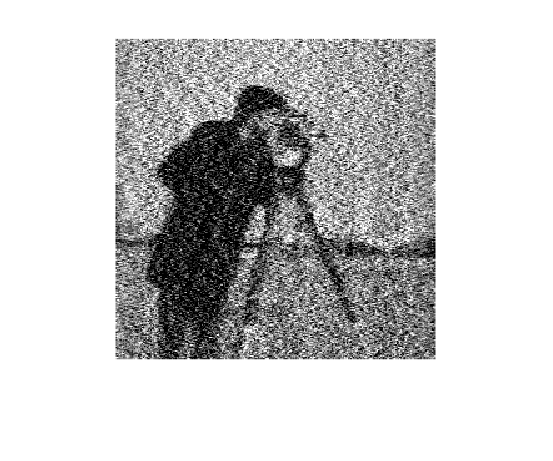}
 			\vspace*{-1.2cm}
 			\caption{$k=10^6$}
 		\end{subfigure} 
 		\caption{\footnotesize First row: Image deblurring using the regularization technique with different values of $\eta$, running for $10^5$ iterations. Second row: Image deblurring using \ref{algorithm_proposed}, stopping at different iterations $k$. 		\vspace*{-0.5cm}}	
 		\label{deblurring}
 	\end{figure*}
 
 $	\EXP{f\left(\bar{x}_N\right)}-  f^*\leq \\  2 \prob{{max}} M^2 \gamma_{{0}}^{-1}\left(\underbrace{\left( \sum_{i=0}^{N-1} \frac{1}{(k+1)^{ar}}\right)^{-1}N^{a(1-r)}}_{\text{term-3}}\right.\\\left.+\underbrace{\left( \sum_{i=0}^{N-1} \frac{1}{(k+1)^{ar}}\right)^{-1}}_{\text{term-2}}\right)  + \vphantom{\underbrace{\left( \sum_{i=0}^{N-1} \frac{1}{(k+1)^{ar}}\right)^{-1} \left( \sum_{k=0}^{N-1} \frac{1}{(k+1)^{ar+b}}\right)}_{\text{term-4}}}\left(2M_g\eta_{{0}}+\gamma_{{0}}\left(C_f^2+C_g^2\eta_{{0}}^2\right)\right).$
 	The above equation  can also be written as \\
	$
 	 	\EXP{f\left(\bar{x}_N\right)}-  f^* \leq 2 \prob{{max}} M^2 \gamma_{{0}}^{-1}\left({\text{term-3}}+{\text{term-2}}\right) +{\text{term-4}}\left(2M_g\eta_{{0}}+\gamma_{{0}}\left(C_f^2+C_g^2\eta_{{0}}^2\right)\right).
 	$\\
 	From Lemma \ref{rate_support}, we have,
 	term-2 $\leq \frac{1-ar}{N^{-ar+1}-1}$ = $\us{\cal O}\left(N^{-(1-ar)}\right),$
 	 term-3 $\leq \frac{(1-ar)N^{a(1-r)}}{N^{-ar+1}-1}$ = $\us{\cal O}\left(N^{-(1-a)}\right),$
 	 term-4 $ \leq \left(\frac{1-ar}{N^{-ar+1}-1}\right)\left(1+\frac{(N+1)^{1-(ar+b)}-1}{1-(ar+b)}\right)  = \us{\cal O}\left(N^{-(1-ar)}\right) + \us{\cal O}\left(N^{-b}\right).
 	$
 	Now, substituting bounds of terms-2, 3, and 4, we have,  \\
$ 	\EXP{f\left(\bar{x}_N\right)}-  f^* \leq \us{\cal O} \left(\mathsf{max} \Big \{N^{-(1-ar)}, N^{-(1-a)}, N^{-b} \Big \}\right) \nonumber\\= \us{\cal O} \left(N^{-\mathsf{min}\{1-ar, \ 1-a,\ b\}}\right).
$\\
 From definitions of $a, r, \text{ and } \delta,$  we obtain the result. \end{proof}
\section{Application of \ref{algorithm_proposed}}
One of the ways to address the ill-posedness in image deblurring is employing the regularization. The ill-posed problem \eqref{ImageProc} is converted into the   regularized problem \eqref{eta_problem} by substituting  functions $f(x) = \|b-Ax\|^2$, and $g(x) = \|x\|_2^2$ in \eqref{eta_problem}.  
As the value of regularization parameter $\eta$ changes,  we solve a different optimization problem \eqref{eta_problem}. The basic idea is, $\eta (\in (0, +\infty))$ governs the way by which solutions of linear inverse problem \eqref{ImageProc} are approximated by \eqref{eta_problem}.  
%
%

We are provided with the blurred noisy image Fig. \ref{blurred_original}(a), which is further converted into the column vector $b$. Our objective is to get the original image, Fig. \ref{blurred_original} (a) using image deblurring. Here we compare two ways of deblurring: standard regularization, and  \ref{algorithm_proposed}.

\noindent{\bf Inference: }  Fig. \ref{deblurring}(a)--(e) show the deblurred images obtained by conventional regularization at different  $\eta$ for $10^5$ iterations.  Fig. \ref{deblurring}(f)--(j) show the deblurred images  using \ref{algorithm_proposed} with stopping at different iteration. \ref{algorithm_proposed} is computationally effective because unlike as the case of conventional regularization, in \ref{algorithm_proposed} we solve the problem instance just once. The tricky part is at what iteration $k$ we should  stop. Stopping at a suitable iteration $k$ is desired because that governs the deblurred image quality. Practically (using \ref{algorithm_proposed}), this seems to be feasible because we could save images after a regular interval of iterations and would stop at any iteration $k$ when the deblurred picture is good enough.
\section{Conclusion} 
In this work, we consider a bilevel optimization problem \eqref{Q} with high dimensional solution space.   Random block coordinate iterative regularized gradient descent \eqref{algorithm_proposed} scheme is developed to address problem \eqref{Q}. We establish the convergence of sequence generated from \ref{algorithm_proposed} to the unique solution of \eqref{Q}. Furthermore, we derive the rate of convergence $ \centering \us{\cal O} \left(\frac{1}{{k}^{0.5-\delta}}\right)$, with respect to the inner level function of the bilevel problem. Our ground assumptions in the convergence proof and rate analysis are mild, such that  $f$ and $g$ can be nondifferentiable functions. 
Demonstration of \ref{algorithm_proposed} on image processing shows that our scheme computationally performs well  compared to the conventional {\it(two loop)} regularization schemes.


\end{document}